\documentclass[12pt,letterpaper,showkeys]{article} 
\usepackage{graphicx}
\usepackage{amssymb}
\usepackage{amsmath}
\usepackage{mathrsfs}
\graphicspath{{figures/}}
\marginparwidth 0pt 
\oddsidemargin 0pt 
\evensidemargin 0pt 
\marginparsep 0pt

\topmargin -.5in 
\hoffset -0.1in 
\textwidth 6.6in 
\textheight 8.5 in

\newtheorem{theorem}{Theorem}[section] 
\newtheorem{lemma}[theorem]{Lemma} 
\newtheorem{corollary}[theorem]{Corollary} 
\newtheorem{proposition}[theorem]{Proposition}

\newtheorem{definition}{Definition}
\newcommand{\beq}{\begin{equation}} 
\newcommand{\eeq}{\end{equation}} 
\newcommand{\beqa}{\begin{eqnarray}} 
\newcommand{\eeqa}{\end{eqnarray}} 
\newcommand{\beqas}{\begin{eqnarray*}} 
\newcommand{\eeqas}{\end{eqnarray*}} 
\newcommand{\ba}{\begin{array}} 
\newcommand{\ea}{\end{array}} 
\newcommand{\bi}{\begin{itemize}} 
\newcommand{\ei}{\end{itemize}} 
\newcommand{\gap}{\hspace*{2em}}

\newcommand{\sgn}{\mathop{\mathrm{sgn}}}

\setcounter{page}{1} 
\def\eqnok#1{(\ref{#1})}

\def\vgap{\vspace*{.1in}}
\def\QED{\ifhmode\unskip\nobreak\fi\ifmmode\ifinner\else\hskip5pt\fi\fi
  \hbox{\hskip5pt\vrule width5pt height5pt depth1.5pt\hskip1pt}} 
\def\ae{{\alpha,\epsilon}}  
\def\Arg{{\rm Arg}}

\def\bF{{F}}

\def\cB{{\cal B}}
\def\bcB{{\bar \cB}}

\def\Diag{{\rm Diag}}
\def\eps{{\epsilon}}
\def\irl{{\rm IRL}}

\def\tx{{\tilde x}}
\def\uel{{u_{\eps}}}


\title{Iterative Reweighted Minimization Methods for $l_p$ Regularized Unconstrained 
Nonlinear Programming}  

\author{
	Zhaosong Lu%
	\thanks{
	Department of Mathematics, Simon Fraser University, Burnaby, BC, 
	V5A 1S6, Canada. (email: {\tt zhaosong@sfu.ca}). This work was supported 
        in part by NSERC Discovery Grant.} 
	}

\date{September 28, 2012}

\begin{document}

\maketitle

\begin{abstract}

In this paper we study  general $l_p$ regularized unconstrained minimization problems.  In particular, 
we derive lower bounds for nonzero entries of first- and second-order stationary points, and hence 
also of local minimizers of the $l_p$ minimization problems. We extend some existing iterative reweighted 
$l_1$ ($\irl_1$) and $l_2$ ($\irl_2$) minimization methods to solve these problems  and proposed new variants 
for them in which each subproblem has a closed form solution. Also, we provide a unified convergence analysis 
for these methods. In addition, we propose a novel Lipschitz continuous $\eps$-approximation to $\|x\|^p_p$. 
Using this result, we develop new $\irl_1$ methods for the  $l_p$ minimization problems and showed that any 
accumulation point of the sequence generated by these methods is a first-order stationary point, provided 
that the approximation parameter $\eps$ is below a computable threshold value. This is a remarkable result 
since all existing iterative reweighted minimization methods require that $\eps$ be dynamically updated and 
approach zero. Our computational results demonstrate that the new $\irl_1$ method  is generally more stable 
than the existing $\irl_1$ methods \cite{FoLa09,ChZh10} in terms of objective function value and CPU time. 

\vskip14pt

\noindent {\bf Key words:} $l_p$ minimization, iterative reweighted $l_1$ minimization,  iterative reweighted 
$l_2$ minimization
\vskip14pt

\end{abstract}

\section{Introduction}

Recently numerous optimization models and methods have been proposed for finding 
sparse solutions to a system or an optimization problem (e.g., see \cite{Ti96,ChDoSa98,
CanTao05,CaRoTa06,HeGiTr06,CanTao07,Ch07,CaWaBo08,ChYi08,VaFr08,FoLa09,BlDa09,BeTe09,
HaYiZh07,WrNoFi09,YuTo11,LuZh12,ZhLi12}). 
In this paper we are interested in one of those models, namely, the $l_p$ 
regularized unconstrained nonlinear programming model 
\beq \label{lp}
\min\limits_{x\in\Re^n} \{F(x) := f(x) + \lambda \|x\|^p_p\},
\eeq 
for some $\lambda>0$ and $p \in (0,1)$, where $f$ is a smooth function 
with $L_f$-Lipschitz-continuous gradient in $\Re^n$, that is,  
\[
\|\nabla f(x)-\nabla f(y)\|_2 \le L_f \|x-y\|_2, \  \ \ \forall x, y \in \Re^n,
\] 
and $f$ is bounded below in $\Re^n$. Here, $\|x\|_p := (\sum^n_{i=1} |x_i|^p)^{1/p}$ 
for any $x\in\Re^n$. One can observe that as $p \downarrow 0$,  problem 
\eqref{lp} approaches the $l_0$ minimization problem
\beq \label{l0}
\min\limits_{x\in\Re^n} f(x) + \lambda \|x\|_0,
\eeq 
which is an exact formulation of finding a sparse vector to minimize the 
function $f$. Some efficient numerical methods such as iterative hard thresholding 
\cite{BlDa09} and penalty decomposition methods \cite{LuZh12} have recently been 
proposed for solving \eqref{l0}. In addition, as $p \uparrow 1$, problem 
\eqref{lp} approaches the $l_1$ minimization problem
\beq \label{l1}
\min\limits_{x\in\Re^n} f(x) + \lambda \|x\|_1,
\eeq 
which is a widely used convex relaxation for \eqref{l0}. When $f$ is a convex 
quadratic function, model \eqref{l1} is shown to be extremely effective in 
finding a sparse vector to minimize $f$. A variety of efficient methods 
were proposed for solving \eqref{l1} over last few years (e.g., see \cite{
VaFr08,BeTe09,HaYiZh07,WrNoFi09,YuTo11}). 
Since problem \eqref{lp} is intermediate between problems \eqref{l0} and 
\eqref{l1}, one can expect that it is also capable of seeking out a sparse 
vector to minimize $f$. As demonstrated by extensive computational studies 
in \cite{Ch07}, problem \eqref{lp} can even produce a sparser solution 
than \eqref{l1} does while both achieve similar values of $f$. 
  
A great deal of effort was recently made by many researchers (e.g., see 
\cite{Ch07,ChSt08,ChYi08,FoLa09,LaWa10,ChXuYe10,ChZh10,DaDeFoGu10,ChGeWaYe11,
GeJiYe11,Su12,BiCh12,ChNiYu12}) 
for studying problem \eqref{lp} or its related problem 
\beq \label{linear-lp} 
\min\limits_{x\in\Re^n} \{\|x\|^p_p: Ax=b\}.
\eeq  
In particular, Chartrand \cite{Ch07}, Chartrand and Staneva \cite{ChSt08}, Foucart and Lai 
\cite{FoLa09}, and Sun \cite{Su12} established some sufficient conditions for recovering the 
sparest solution to a undetermined linear system $Ax=b$ by the model \eqref{linear-lp}. 
Efficient iterative reweighted $l_1$ ($\irl_1$) and $l_2$ ($\irl_2$) minimization 
algorithms were also proposed for finding an approximate solution to \eqref{linear-lp}
by Foucart and Lai \cite{FoLa09} and Daubechies et al.\ \cite{DaDeFoGu10}, respectively. Though 
problem \eqref{linear-lp} is generally NP hard (see \cite{ChGeWaYe11,GeJiYe11}), it is shown in 
\cite{FoLa09,DaDeFoGu10} that under some assumptions, the sequence generated by $\irl_1$ and $\irl_2$ 
algorithms converges to the sparest solution to the above linear system, which is also the global 
minimizer of \eqref{linear-lp}. In addition, Chen et al.\ \cite{ChXuYe10} considered a special 
case of problem \eqref{lp} with $f(x)=\frac12\|Ax-b\|^2_2$, namely, the problem 
\beq \label{l2-lp}
\min\limits_{x\in\Re^n} \frac12\|Ax-b\|^2_2 + \lambda \|x\|^p_p.
\eeq
They derived lower bounds for nonzero entries of local minimizers of \eqref{l2-lp} 
and also proposed a hybrid orthogonal matching pursuit-smoothing gradient method 
for solving \eqref{l2-lp}. Since $\|x\|^p_p$ is non-Lipschitz continuous,  
Chen and Zhou \cite{ChZh10} recently considered the following approximation to \eqref{l2-lp}: 
\[
\min\limits_{x\in\Re^n} \frac12\|Ax-b\|^2_2 + \lambda \sum^n_{i=1}(|x_i|+\eps)^p
\]
for some small $\eps >0$. And they also proposed an $\irl_1$ algorithm to solve this approximation 
problem. Recently, Lai and Wang \cite{LaWa10} considered another approximation to \eqref{l2-lp}, 
which is
\[
\min\limits_{x\in\Re^n} \frac12\|Ax-b\|^2_2 + \lambda \sum^n_{i=1}(|x_i|^2+\eps)^{p/2},
\]
and proposed an $\irl_2$ algorithm for solving this approximation. Very recently, 
Bian and Chen \cite{BiCh12} and Chen et al.\ \cite{ChNiYu12} proposed a smoothing sequential 
quadratic programming (SQP) algorithm and a smoothing trust region Newton (TRN) method, respectively, 
for solving a class of nonsmooth nonconvex problems that include \eqref{lp} as a special case. When 
applied to problem \eqref{lp}, their methods first approximate $|x|^p_p$ by 
a suitable smooth function and then apply an SQP or a TRN algorithm to solve the resulting approximation 
problem. Lately, Bian et al. \cite{BiChYe12} proposed first- and second-order interior point algorithms 
for solving a class of non-Lipschitz and nonconvex minimization problems with {\it bounded} box constraints, 
which can be suitably applied to $l_p$ regularized minimization problems over a {\it compact} box.  

In this paper we consider general $l_p$ regularized unconstrained optimization 
problem \eqref{lp}. In particular, we first derive lower bounds for nonzero entries of first- 
and second-order stationary points, and hence also of local minimizers of \eqref{lp}. We then 
extend the aforementioned $\irl_1$ and $\irl_2$ methods \cite{FoLa09,DaDeFoGu10,LaWa10,ChZh10} 
to solve \eqref{lp} and propose some new variants for them. We also provide a unified 
convergence analysis for these methods. Finally, we propose a novel Lipschitz continuous 
$\eps$-approximation to $\|x\|^p_p$ and also propose a locally Lipschitz continuous function 
$F_{\eps}(x)$ to approximate $F(x)$. Subsequently, we develop $\irl_1$ minimization methods 
for solving the resulting approximation problem $\min_{x\in\Re^n} F_\eps(x)$. We show that 
any accumulation point of the sequence generated by these methods is a first-order 
stationary point of problem \eqref{lp}, provided that $\eps$ is below a computable threshold 
value. This is a remarkable result since all existing iterative reweighted minimization methods 
for $l_p$ minimization problems require that $\eps$ be dynamically updated and approach zero. 

The outline of this paper is as follows. In  Subsection \ref{notation} we introduce some notations 
that are used in the paper. In Section \ref{tech} we derive lower bounds for nonzero entries of 
stationary points, and hence also of local minimizers of problem \eqref{lp}. We also propose a 
locally Lipschitz continuous function $F_{\eps}(x)$ to approximate $F(x)$ and study some properties 
of the approximation problem $\min_{x\in\Re^n} F_\eps(x)$. In Section \ref{unify}, we extend the 
existing $\irl_1$ and $\irl_2$ minimization methods from problems \eqref{linear-lp} and \eqref{l2-lp} 
to general problems \eqref{lp} and  propose new variants for them. We also provide a unified convergence 
analysis for these methods. In Section \ref{new-IRL1} we propose new $\irl_1$ methods for solving \eqref{lp} 
and establish their convergence. In Section \ref{results}, we conduct numerical experiments to 
compare the performance of several $\irl_1$ minimization methods that are studied in this paper 
for \eqref{lp}. Finally, in Section \ref{conclude} we present some concluding remarks. 

\subsection{Notation} \label{notation}

The set of all $n$-dimensional positive vectors is denoted by $\Re^n_{++}$. Given any $x\in \Re^n$ 
and a scalar $\tau$, $|x|^\tau$ denotes an $n$-dimensional vector whose $i$th component is $|x_i|^\tau$. 
In addition, $\Diag(x)$ denotes an $n\times n$ diagonal matrix whose diagonal is formed by the vector $x$. 
Given an index set $\cB \subseteq \{1,\ldots, n\}$, $x_\cB$ denotes the sub-vector of $x$ indexed by $\cB$. 
Similarly, $X_{\cB\cB}$ denotes the sub-matrix of $X$ whose rows and columns are indexed by $\cB$. In addition, 
if a matrix $X$ is positive semidefinite, we write $X \succeq 0$. The sign operator is 
denoted by $\sgn$, that is, 
\[
\sgn(t) = \left\{\ba{ll}
1 & \mbox{if} \ t >0, \\
{[-1,1]} & \mbox{if} \ t=0, \\
-1 & \mbox{otherwise}. 
\ea\right.
\]
Finally,  for any $\beta <0$, we define $0^\beta = \infty$. 
     
\section{Technical results} \label{tech}

In this section we derive lower bounds for nonzero entries of stationary 
points, and hence also of local minimizers of problem \eqref{lp}. We also 
propose a nonsmooth but locally Lipschitz continuous function $F_{\eps}(x)$ 
to approximate $F(x)$. Moreover, we show that when $\eps$ is below a 
computable threshold value, a certain stationary point of the corresponding 
approximation problem $\min_{x\in\Re^n} F_\eps(x)$ is also that of 
\eqref{lp}. This result plays a crucial role  in  developing new 
$\irl_1$ methods for solving \eqref{lp} in Section \ref{new-IRL1}.

\subsection{Lower bounds for nonzero entries of stationary points of \eqref{lp}}
\label{lower-bdd}

The first- and second-order stationary points of problem \eqref{lp} are 
defined in \cite{ChXuYe10}. We first review these definitions. Then we derive 
lower bounds for nonzero entries of the stationary points, and hence also  
of local minimizers of problem \eqref{lp}. 
 
\begin{definition}
Let $x^*$ be a vector in $\Re^n$ and $X^* = \Diag(x^*)$.
$x^*\in\Re^n$ is a first-order stationary point of \eqref{lp} if 
\beq \label{1st-cond}
X^* \nabla f(x^*) + \lambda p |x^*|^p = 0.
\eeq
In addition, $x^*\in\Re^n$ is a second-order stationary 
point of \eqref{lp} if 
\beq \label{2nd-cond}
(X^*)^T \nabla^2 f(x^*) X^* + \lambda p(p-1) \Diag(|x^*|^p) \ \succeq \ 0.
\eeq 
\end{definition}

Similar to general unconstrained smooth optimization, we can show that 
any local minimizer of \eqref{lp} is also a stationary point that is 
defined above.

\begin{proposition} \label{nec-conds}
Let $x^*$ be a local minimizer of \eqref{lp} and $X^* = \Diag(x^*)$. 
The following statements hold:
\bi
\item[(i)] $x^*$ is a first-order stationary point, that is, \eqref{1st-cond} 
holds at $x^*$. 
\item[(ii)] Further, if $f$ is twice continuously differentiable in a neighborhood 
of $x^*$, then $x^*$ is a second-order stationary point, that is, \eqref{2nd-cond} 
holds at $x^*$. 
\ei
\end{proposition}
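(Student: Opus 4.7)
The plan is to reduce everything to the smooth case by splitting coordinates according to whether they lie in the support of $x^*$. Let $I_1 = \{i : x^*_i \neq 0\}$ and $I_0 = \{i : x^*_i = 0\}$. On directions supported in $I_1$ the function $F$ is as smooth as $f$ in a neighborhood of $x^*$ (because $|t|^p$ is $C^\infty$ away from $t=0$), so standard unconstrained optimality can be applied there; directions supported in $I_0$ are handled separately.

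For part (i), I would take partial derivatives coordinate by coordinate in $I_1$. Since $x^*$ is a local minimizer and $F$ is differentiable at $x^*$ along any $e_i$ with $i \in I_1$, Fermat's rule gives
\[
\nabla_i f(x^*) + \lambda p\, |x^*_i|^{p-1}\sgn(x^*_i) \;=\; 0, \qquad i \in I_1.
\]
Multiplying by $x^*_i$ and using $x^*_i \sgn(x^*_i) = |x^*_i|$ yields $x^*_i \nabla_i f(x^*) + \lambda p\, |x^*_i|^p = 0$, which is the $i$th entry of \eqref{1st-cond}. For $i \in I_0$ both $(X^*\nabla f(x^*))_i$ and $\lambda p |x^*_i|^p$ vanish (using $p>0$ for the latter), so \eqref{1st-cond} holds trivially on $I_0$. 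No subgradient machinery is needed because the awkward coordinates are precisely the ones where $x^*_i=0$ kills the equation.

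For part (ii), restrict to perturbations of the form $x^* + t v$ with $v$ supported on $I_1$. For $|t|$ small, $x^*_i + t v_i$ retains the sign of $x^*_i$ for every $i \in I_1$, so the function $t \mapsto F(x^* + tv)$ is $C^2$ near $t=0$, and the standard second-order necessary condition gives
\[
v_{I_1}^T\!\left[\,\nabla^2 f(x^*)_{I_1 I_1} + \lambda p(p-1)\Diag(|x^*_{I_1}|^{p-2})\right] v_{I_1} \;\ge\; 0
\]
for all such $v$. Hence the bracketed matrix $H$ is positive semidefinite. To convert this into the stated condition, I would perform a diagonal congruence by $\Diag(x^*_{I_1})$: writing $\Diag(x^*_{I_1}) = \Diag(\sgn(x^*_{I_1}))\,\Diag(|x^*_{I_1}|)$ and using that conjugation by a $\pm 1$ diagonal preserves positive semidefiniteness, one gets
\[
\Diag(x^*_{I_1})\,\nabla^2 f(x^*)_{I_1 I_1}\,\Diag(x^*_{I_1}) + \lambda p(p-1)\Diag(|x^*_{I_1}|^p) \;\succeq\; 0,
\]
since $\Diag(|x^*_{I_1}|)\Diag(|x^*_{I_1}|^{p-2})\Diag(|x^*_{I_1}|) = \Diag(|x^*_{I_1}|^p)$.

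The remaining task is to lift this $I_1\times I_1$ inequality to the full matrix in \eqref{2nd-cond}. The rows and columns of $(X^*)^T\nabla^2 f(x^*)X^*$ indexed by $I_0$ vanish because $X^*$ has a zero there, and the rows/columns of $\Diag(|x^*|^p)$ indexed by $I_0$ vanish because $p>0$; thus the full matrix is block-diagonal with a zero $I_0$-block and the already-verified PSD $I_1$-block, so \eqref{2nd-cond} holds. The only delicate point in the whole argument is checking that restricting test directions to $I_1$ is enough to capture the full matrix inequality — but this works precisely because the statement \eqref{2nd-cond} is itself supported on $I_1\times I_1$, thanks to the convention $|0|^p = 0$; there is no need to analyze perturbations into $I_0$, which in any case increase $F$ by an $\Omega(|t|^p)$ term that dominates any quadratic behaviour.
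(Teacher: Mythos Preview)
Your proof is correct and follows essentially the same route as the paper: split into the support $\cB=\{i:x^*_i\neq 0\}$ (your $I_1$), apply standard first- and second-order necessary conditions on the restricted smooth problem, multiply through by $x^*_i$ or conjugate by $\Diag(x^*_\cB)$, and note that the $I_0$ rows/columns vanish. Your treatment of (ii) is slightly more explicit than the paper's (you spell out the diagonal congruence and the block-diagonal lifting, whereas the paper just asserts that \eqref{2nd-cond} follows from the $\cB\cB$-block inequality), but the substance is identical.
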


\begin{proof}
(i) Let $\cB = \{i: x^*_i \neq 0\}$. Since $x^*$ is a local minimizer of 
\eqref{lp}, one can observe that $x^*$ is also a local minimizer of 
\beq \label{sub-lp}
\min\limits_{x\in\Re^n} \{f(x) + \lambda \|x_\cB\|^p_p: x_i = 0, \ i \notin \cB\}.
\eeq
Note that the objective function of \eqref{sub-lp} is differentiable at $x^*$. 
The first-order optimality condition of \eqref{sub-lp} yields 
\[
\frac{\partial f(x^*)}{\partial x_i} + \lambda p |x^*_i|^{p-1} \sgn(x^*_i) = 0,  
\ \forall i \in \cB.
\] 
Multiplying by $x^*_i$ both sides of the above equality, we have 
\[
x^*_i\frac{\partial f(x^*)}{\partial x_i} + \lambda p |x^*_i|^{p}=0, \ \forall i\in\cB.
\] 
Since $x^*_i=0$ for $i\notin \cB$, we observe that the above equality also holds for 
$i\notin \cB$. Hence, \eqref{1st-cond} holds.

(ii) By the assumption, we observe that the objective function of \eqref{sub-lp} 
is twice continuously differentiable at $x^*$. The second-order optimality 
condition of \eqref{sub-lp} yields 
\[
\nabla^2_{\cB\cB}f(x^*) +  \lambda p(p-1) \Diag(|x^*_\cB|^{p-2}) \ \succeq \ 0,
\]
which, together with the fact that $X^*=\Diag(x^*)$ and $x^*_i=0$ for $i \notin \cB$, 
implies that \eqref{2nd-cond} holds.
\end{proof}

\gap

Recently, Chen et al.\ \cite{ChXuYe10} derived some interesting lower bounds 
for the nonzero entries of local minimizers of problem \eqref{lp} for the special 
case where $f(x)=\frac12 \|Ax-b\|^2$ for some $A\in\Re^{m \times n}$ and $b\in\Re^m$.  
We next establish similar lower bounds for the nonzero entries of stationary points, 
and hence also of local minimizers of general problem \eqref{lp}.

\begin{theorem} \label{2nd-lower-bdd}
Let $x^*$ be a second-order stationary point of \eqref{lp} and $\cB=\{i:x^*_i \neq 0\}$. 
Suppose that $f$ is twice continuously differentiable in a neighborhood of $x^*$. Then 
the following statement holds:
\beq \label{2nd-bdd}
|x^*_i| \ \ge \ \left(\frac{\lambda p(1-p)}{L_f}\right)^{\frac{1}{2-p}}, \ \ \ \forall i 
\in \cB.
\eeq
 \end{theorem}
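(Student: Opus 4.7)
The plan is to extract a scalar inequality for each nonzero coordinate from the matrix inequality in the second-order stationary condition \eqref{2nd-cond}, and then use the Lipschitz gradient assumption to control the Hessian's diagonal entries.

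First I would pick an index $i\in\cB$ and consider the $(i,i)$ diagonal entry of the positive semidefinite matrix on the left-hand side of \eqref{2nd-cond}. Since any principal (in particular diagonal) entry of a PSD matrix is nonnegative, and $X^* = \Diag(x^*)$ has $(i,i)$ entry $x^*_i$, this gives
\[
(x^*_i)^2 \frac{\partial^2 f(x^*)}{\partial x_i^2} + \lambda p(p-1)|x^*_i|^p \;\ge\; 0.
\]
Because $p\in(0,1)$ the coefficient $p(p-1)$ is negative, so this rearranges to
\[
(x^*_i)^2 \frac{\partial^2 f(x^*)}{\partial x_i^2} \;\ge\; \lambda p(1-p)|x^*_i|^p.
\]

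Next I would bound the left side using $L_f$-Lipschitz continuity of $\nabla f$. Since $f$ is twice continuously differentiable near $x^*$, this Lipschitz property forces $\|\nabla^2 f(x^*)\|_2 \le L_f$, and in particular every diagonal entry satisfies $\partial^2 f(x^*)/\partial x_i^2 \le L_f$ (by testing the Hessian against the standard basis vector $e_i$). Combining,
\[
L_f\,(x^*_i)^2 \;\ge\; \lambda p(1-p)|x^*_i|^p.
\]
For $i\in\cB$ we have $|x^*_i|>0$, so dividing through by $|x^*_i|^p$ and raising to the power $1/(2-p)$ yields the desired bound \eqref{2nd-bdd}.

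I expect the proof to be short; the only subtle step is justifying the passage from the matrix PSD condition to a coordinate-wise inequality (using that diagonal entries of a PSD matrix are nonnegative) and invoking the standard fact that $L_f$-Lipschitz gradient implies $\|\nabla^2 f(x^*)\|_2 \le L_f$. Neither is deep, but they are what make the argument work; everything else is algebraic manipulation.
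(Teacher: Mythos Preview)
Your proposal is correct and follows essentially the same argument as the paper: extract the $(i,i)$ diagonal entry from the PSD condition \eqref{2nd-cond}, bound $[\nabla^2 f(x^*)]_{ii}$ by $L_f$ via the Lipschitz-gradient assumption, and rearrange. The only cosmetic difference is that the paper divides by $(x^*_i)^2$ before bounding the Hessian entry, whereas you divide by $|x^*_i|^p$ afterward; the logic is identical.
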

 
\begin{proof}
Since $f$ is twice continuously differentiable in a neighborhood of $x^*$ and 
$f$ has $L_f$-Lipschitz-continuous gradient in $\Re^n$, we see that 
$\|\nabla^2 f(x^*)\|_2 \le L_f$. In addition, since $x^*$ satisfies \eqref{2nd-cond}, we have
\[
e^T_i [(X^*)^T \nabla^2 f(x^*) X^*] e_i + \lambda p(p-1) e^T_i\Diag(|x^*|^{p-2})] e_i \ge 0, 
\] 
where $e_i$ is the $i$th coordinate vector. It then follows that for each $i \in \cB$, 
\[
[\nabla^2 f(x^*)]_{ii} + \lambda p(p-1) |x^*_i|^{p-2} \ge 0,
\]
which yields
\[
|x^*_i| \ge \left(\frac{\lambda p(1-p)}{[\nabla^2 f(x^*)]_{ii}}\right)^{\frac{1}{2-p}} \ge 
\left(\frac{\lambda p(1-p)}{\|\nabla^2 f(x^*)\|_2}\right)^{\frac{1}{2-p}} \ \ge \ 
\left(\frac{\lambda p(1-p)}{L_f}\right)^{\frac{1}{2-p}},  \ \ \ \forall i \in \cB.
\] 
\end{proof}
 
\gap

\begin{theorem} \label{1st-lower-bdd}
Let $x^*$ be a first-order stationary point satisfying $F(x^*) \le F(x^0)+\eps$ for 
some $x^0 \in\Re^n$ and $\eps \ge 0$, and let $\underline{f} = \inf_{x\in\Re^n} f(x)$ 
and $\cB = \{i:x^*_i \neq 0\}$. Then the following statement holds:
\beq \label{1st-bdd}
|x^*_i| \ \ge \ \left(\frac{\lambda p}{\sqrt{2L_f[F(x^0)+\eps-\underline{f}]}}
\right)^{\frac{1}{1-p}}, 
\ \ \  \forall i \in \cB.
\eeq
\end{theorem}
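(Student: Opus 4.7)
The plan is to combine two ingredients: the first-order stationarity condition, which directly relates $|x^*_i|^{1-p}$ to a component of the gradient $\nabla f(x^*)$, and a standard descent-lemma bound that controls $\|\nabla f(x^*)\|_2$ by how far $f(x^*)$ lies above its infimum $\underline{f}$.

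First I would unpack \eqref{1st-cond} componentwise. For $i \in \cB$, $x^*_i \neq 0$, and dividing through gives
\[
|x^*_i|^{\,p-1} \;=\; \frac{-\sgn(x^*_i)\,\partial_i f(x^*)}{\lambda p} \;\le\; \frac{|\partial_i f(x^*)|}{\lambda p} \;\le\; \frac{\|\nabla f(x^*)\|_2}{\lambda p}.
\]
Since $p-1<0$, inverting this yields
\[
|x^*_i| \;\ge\; \left(\frac{\lambda p}{\|\nabla f(x^*)\|_2}\right)^{\frac{1}{1-p}}, \quad \forall i \in \cB.
\]

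Next I would bound $\|\nabla f(x^*)\|_2$. The $L_f$-Lipschitz continuity of $\nabla f$ on $\Re^n$ gives the classical descent inequality $f(y) \le f(x) + \nabla f(x)^T(y-x) + \tfrac{L_f}{2}\|y-x\|^2$ for all $x,y$. Evaluating at $y = x - \tfrac{1}{L_f}\nabla f(x)$ and minimizing the right side produces $f(x) - \underline{f} \ge \tfrac{1}{2L_f}\|\nabla f(x)\|_2^2$, so that $\|\nabla f(x)\|_2 \le \sqrt{2L_f(f(x)-\underline{f})}$ for every $x$. Applying this at $x = x^*$ and using $f(x^*) \le F(x^*) \le F(x^0)+\eps$ (because $\lambda\|x^*\|_p^p \ge 0$) yields
\[
\|\nabla f(x^*)\|_2 \;\le\; \sqrt{2L_f\bigl[F(x^0)+\eps-\underline{f}\bigr]}.
\]

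Substituting this into the earlier inequality gives exactly \eqref{1st-bdd}. No part of this is really an obstacle; the only care needed is to note that although $F$ is not differentiable, the first-order condition \eqref{1st-cond} is stated in the multiplied-through form that sidesteps the singularity at $x_i^*=0$, and that the descent-lemma bound applies to $f$ (which is smooth) rather than $F$. The bound $f(x^*)\le F(x^0)+\eps$ then feeds that smooth-function bound with the level-set information about $F$.
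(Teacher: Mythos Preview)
Your proof is correct and follows essentially the same approach as the paper: both use the descent lemma applied to $f$ at the step $y=x^*-\tfrac{1}{L_f}\nabla f(x^*)$ to bound $\|\nabla f(x^*)\|_2$ via $f(x^*)\le F(x^0)+\eps$, and both extract the componentwise relation $|x^*_i|^{p-1}=|\partial_i f(x^*)|/(\lambda p)$ from \eqref{1st-cond}. The only difference is the order in which the two ingredients are presented.
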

 
\begin{proof}
Since $f$ has $L_f$-Lipschitz-continuous gradient in $\Re^n$, it is well known that
\[
f(y) \le f(x) + \nabla f(x)^T(y-x) + \frac{L_f}{2}\|y-x\|^2_2, \ \ \ \forall x,y\in \Re^n.
\]
Letting $x=x^*$ and $y=x^*-\nabla f(x^*)/L_f$, we obtain that 
\beq \label{grad-bdd1}
f(x^*-\nabla f(x^*)/L_f) \ \le \ f(x^*) - \frac{1}{2L_f} \|\nabla f(x^*)\|^2_2.
\eeq
Note that 
\[
f(x^*-\nabla f(x^*)/L_f) \ge \inf_{x\in\Re^n} f(x) = \underline{f}, \ \ \ \ \  
f(x^*) \le F(x^*) \le F(x^0) + \eps.
\]
Using these relations and \eqref{grad-bdd1}, we have  
\beq \label{grad-bdd2}
\|\nabla f(x^*)\|_2 \ \le \ \sqrt{2L_f[f(x^*)-f(x^*-\nabla f(x^*)/L_f)]} \ \le \ 
\sqrt{2L_f[F(x^0)+\eps-\underline{f}]}.
\eeq
Since $x^*$ satisfies \eqref{1st-cond}, we obtain that for every $i\in \cB$,
\[
|x^*_i| = \left(\frac{1}{\lambda p}\left|\frac{\partial f(x^*)}{\partial x_i}\right|\right)^{\frac{1}{p-1}} 
\ \ge \ \left(\frac{\|\nabla f(x^*)\|_2}{\lambda p}\right)^{\frac{1}{p-1}},
\]
which together with \eqref{grad-bdd2} yields
\[
|x^*_i| \ \ge \ \left(\frac{\lambda p}{\sqrt{2L_f[F(x^0)+\eps-\underline{f}]}}\right)^{\frac{1}{1-p}}, 
\ \ \ \forall i \in \cB.
\]
\end{proof}
 
\subsection{Locally Lipschitz continuous approximation to \eqref{lp}}
\label{lip-approx}

It is known that for $p\in (0,1)$, the function $\|x\|_p^p$ is not locally 
Lipschitz continuous at some points in $\Re^n$ and the Clarke subdifferential 
does not exist there (see, for example, \cite{ChXuYe10}). This brings a great deal of 
challenge for designing algorithms for solving problem \eqref{lp}. In this 
subsection we propose a nonsmooth but Lipschitz continuous $\eps$-approximation to 
$\|x\|_p^p$ for every $\eps >0$. As a consequence, we obtain a nonsmooth but locally 
Lipschitz continuous $\eps$-approximation $F_{\eps}(x)$ to $F(x)$. Furthermore, we 
show that when $\eps$ is below a computable threshold value, a certain stationary point 
of the corresponding approximation problem $\min_{x\in\Re^n} F_\eps(x)$ is also 
that of \eqref{lp}.      
 
\begin{lemma} \label{approx-abs}
Let $u >0$ be arbitrarily given, and let $q$ be such that 
\beq \label{q}
\frac{1}{p}+\frac{1}{q}=1.
\eeq
Define 
\beq \label{h}
h_u(t) := \min\limits_{0 \le s \le u} p\left(|t|s-\frac{s^q}{q}\right), \ \ \ \forall t \in \Re.
\eeq
Then the following statements hold:
\bi
\item[(i)]
 $0 \ \le \ h_u(t)-|t|^p \ \le \ u^q$ for every $t \in \Re$.
 \item[(ii)] $h_u$ is $p u$-Lipschitz continuous in $(-\infty,\infty)$, 
i.e., 
\[
|h_u(t_1) - h_u(t_2)| \ \le p u |t_1-t_2|, \ \ \  \forall t_1, t_2\in\Re.
\] 
\item[(iii)] The Clarke subdifferential of $h_u$, denoted by $\partial h_u$, exists 
everywhere, and it is given by 
\beq \label{deriv}
\partial h_u(t) = \left\{\ba{ll}
p|t|^{p-1} \sgn(t) & \ \mbox{if} \  |t| > u^{q-1}, \\ [6pt]
p u \sgn(t) & \ \mbox{if} \ |t| \le u^{q-1}.
\ea\right.
\eeq 
\ei
\end{lemma}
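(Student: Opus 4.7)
The plan is to solve the one-dimensional minimization defining $h_u$ in closed form and then read off (i)--(iii) from the resulting piecewise expression.

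First I would study the inner function $g_t(s):=p(|t|s-s^q/q)$ on $s>0$. From \eqref{q} together with $p\in(0,1)$ one has $q<0$, so $g_t(s)\to+\infty$ as $s\downarrow 0$ and the left boundary is never active. Using the identity $1/(q-1)=p-1$, the equation $g_t'(s)=p(|t|-s^{q-1})=0$ has the unique positive root $s^{*}=|t|^{p-1}$; since $s^{q-1}$ is strictly decreasing in $s$, $g_t'$ is strictly increasing, so $s^{*}$ is the global minimizer on $(0,\infty)$, and substitution yields $g_t(s^{*})=|t|^p$. The same identity also gives $s^{*}\le u\iff|t|\ge u^{q-1}$. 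Therefore, if $|t|\ge u^{q-1}$ then $h_u(t)=|t|^p$; otherwise $g_t$ is strictly decreasing on $[0,u]$ and the minimum is at $s=u$, giving $h_u(t)=pu|t|-pu^q/q=pu|t|+(1-p)u^q$ after using $-p/q=1-p$. The two branches agree at $|t|=u^{q-1}$ (each equals $u^q$), so $h_u$ is continuous.

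With this closed form in hand, the three claims follow essentially by inspection. For (i), the lower bound $h_u(t)\ge|t|^p$ is immediate since a constrained minimum dominates the unconstrained one. For the upper bound only the regime $|t|\le u^{q-1}$ is nontrivial, and there the inequality reduces to $\phi(r):=pur-r^p\le pu^q$ on $[0,u^{q-1}]$; since $\phi'(r)=p(u-r^{p-1})<0$ on the interior (the critical point $r^{p-1}=u$ sits exactly at the right endpoint $r=u^{q-1}$), $\phi$ is decreasing and attains its maximum $\phi(0)=0\le pu^q$. For (ii), in the outer branch $|h_u'(t)|=p|t|^{p-1}\le p(u^{q-1})^{p-1}=pu$ by the auxiliary identity $(p-1)(q-1)=1$, while in the inner branch $|h_u'(t)|=pu$ outright; moreover the one-sided derivatives match at $|t|=u^{q-1}$, so $h_u$ is $C^1$ away from the origin with derivative uniformly bounded by $pu$, hence $pu$-Lipschitz everywhere (the origin being handled by continuity). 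For (iii), the Clarke subdifferential coincides with the classical derivative in the smooth regions; at the unique nondifferentiability $t=0$ the function equals $pu|t|+(1-p)u^q$ locally, whose Clarke subdifferential is $pu\,[-1,1]=pu\,\sgn(0)$ under the paper's sign convention. The two candidate formulas in \eqref{deriv} agree at the seam $|t|=u^{q-1}$ by the same identity $(p-1)(q-1)=1$, consistent with the case partition stated in the lemma.

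I expect the only real obstacle to be careful bookkeeping around the negative conjugate exponent $q<0$ and the interchangeable identities $1/(p-1)=q-1$ and $(p-1)(q-1)=1$, which are invoked repeatedly to switch between $p$- and $q$-expressions for both the threshold $u^{q-1}$ and the derivative bound $pu$; the one genuine calculation is the monotonicity check on $\phi$ that closes the upper bound in (i), and everything else is algebraic rearrangement of the explicit piecewise formula for $h_u$.
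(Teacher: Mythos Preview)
Your proposal is correct and follows essentially the same route as the paper: solve the one-variable problem in \eqref{h} explicitly via the critical point $s^*=|t|^{1/(q-1)}=|t|^{p-1}$, obtain the two-branch closed form for $h_u$, and then read off (i)--(iii). The only noticeable difference is in the upper bound of (i): the paper simply observes that in the regime $|t|\le u^{q-1}$ one has $h_u(t)\le p(u^{q-1}\cdot u - u^q/q)=u^q$ and hence $h_u(t)-|t|^p\le h_u(t)\le u^q$, whereas you carry the $|t|^p$ term along and check monotonicity of $\phi(r)=pur-r^p$; your argument is a touch more work but yields the same conclusion (indeed a slightly sharper intermediate bound $h_u(t)-|t|^p\le (1-p)u^q$).
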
 

\begin{proof}
(i) Let $g_t(s) = p(|t|s-s^q/q)$ for $s>0$. Since $p\in (0,1)$, we observe from \eqref{q} 
that $q<0$. It then implies that $g_t(s) \to \infty$ as $s \downarrow 0$. This together with 
the continuity of $g_t$ implies that $h_u(t)$ is well-defined for all $t\in\Re$. In addition, 
it is easy to show that $g_t(\cdot)$ is convex in $(0,\infty)$, and moreover,
$\inf\limits_{s>0} g_t(s) \ = \ |t|^p$. Hence, we have
\[
h_u(t) \ = \ \min\limits_{0 \le s \le u} g_t(s) \ \ge \ \inf\limits_{s>0} g_t(s) \ = \ 
|t|^p, \ \ \ \forall t \in \Re.
\]
We next show that 
$\ h_u(t)-|t|^p \ \le \ u^q$ by dividing its proof into two cases.
\bi
\item[1)] Assume that $|t| > u^{q-1}$. Then, the optimal value of \eqref{h} is 
achieved at $s^* = |t|^{\frac{1}{q-1}}$ and hence, 
\[
h_u(t) \ = \ p\left(|t|s^*-\frac{(s^*)^q}{q}\right) \ = \ |t|^p. 
\] 
\item[2)] Assume that $|t| \le u^{q-1}$. It can be shown that the 
optimal value of \eqref{h} is achieved at $s^*=u$. Using this result and 
the relation $|t| \le u^{q-1}$, we obtain that 
\[
h_u(t) \ = \ p\left(|t|u-\frac{u^q}{q}\right) \ \le \ 
p\left( u^{q-1} u-\frac{u^q}{q}\right) \ = \ u^q,
\]  
which implies that $h_u(t) - |t|^p \ \le \ h_u(t) \ \le \ u^q$. 
\ei
Combining the above two cases, we conclude that statement (i) holds. 

\vgap

(ii) Let $\phi: [0,\infty) \to \Re$ be defined as follows: 
\[
\phi(t) = \left\{\ba{ll}
t^p & \mbox{if} \  t > u^{q-1}, \\ [6pt]
p(t u-u^q/q) & \mbox{if} \   0 \le t \le u^{q-1}.
\ea\right.
\]
It is not hard to see that 
\beq \label{phi-deriv}
\phi'(t) = \left\{\ba{ll}
pt^{p-1} & \mbox{if} \  t > u^{q-1}, \\ [6pt]
p u & \mbox{if} \   0 \le t \le u^{q-1}.
\ea\right.
\eeq
Hence, $0 \le \phi'(t) \le pu$ for every $t\in[0,\infty)$, which implies that 
$\phi$ is $p u$-Lipschitz continuous on $[0,\infty)$. In addition, one can 
observe from the proof of (i) that $h_u (t) = \phi(|t|)$ for all $t$. 
By the chain rule, we easily conclude that $h_u$ is $p u$-Lipschitz continuous 
in $(-\infty,\infty)$.  

(iii) Since $h_u$ is Lipschitz continuous everywhere, it follows from Theorem 2.5.1 of 
\cite{Clarke83} that  
\beq \label{clarke-diff}
\partial h_u(t) = {\rm cov}\left\{\lim\limits_{t_k \in D \to t} h'_u(t_k)\right\},
\eeq
where ${\rm cov}$ denotes convex hull and $D$ is the set of points at which 
$h_u$ is differentiable. Recall that $h_u(t) = \phi(|t|)$ for all $t$. Hence, 
$h'_u(t) = \phi'(|t|)\sgn(t)$ for every $t \neq 0$. Using this relation, \eqref{phi-deriv} 
and \eqref{clarke-diff}, we immediately see that statement (iii) holds.   
\end{proof} 

\gap

\begin{corollary} \label{approx-pnorm} 
Let $u >0$ be arbitrarily given, and let $h(x) = \sum^n_{i=1} h_u(x_i)$ 
for every $x\in\Re^n$, where $h_u$ is defined in \eqref{h}. Then the 
following statements hold:
\bi
\item[(i)]
 $0 \ \le \ h(x)-\|x\|^p_p \ \le \ n u^q$ for every $x\in \Re^n$.
 \item[(ii)] $h$ is $\sqrt{n}pu$-Lipschitz continuous in $\Re^n$, 
i.e., 
\[
\|h(x) - h(y)\|_2 \ \le \sqrt{n} p u \|x-y\|_2, \ \ \ \ \forall x, y.
\] 
 \item[(iii)] The Clark subdifferential of $h$ exists at every $x\in\Re^n$.
 \ei
\end{corollary}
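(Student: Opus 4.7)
The plan is to reduce each statement to the corresponding one in Lemma \ref{approx-abs} via the separable structure $h(x)=\sum_{i=1}^n h_u(x_i)$.

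For (i), I would apply Lemma \ref{approx-abs}(i) coordinatewise: for each $i$, one has $0\le h_u(x_i)-|x_i|^p\le u^q$. Summing over $i=1,\ldots,n$ and using $\|x\|_p^p=\sum_{i=1}^n|x_i|^p$ immediately yields $0\le h(x)-\|x\|_p^p\le n u^q$.

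For (ii), the notation $\|h(x)-h(y)\|_2$ should be read as $|h(x)-h(y)|$ since $h$ is scalar-valued. I would bound
\[
|h(x)-h(y)| \ \le \ \sum_{i=1}^n |h_u(x_i)-h_u(y_i)| \ \le \ pu\sum_{i=1}^n |x_i-y_i| \ = \ pu\,\|x-y\|_1,
\]
using the triangle inequality and then the one-dimensional Lipschitz bound from Lemma \ref{approx-abs}(ii). A single application of Cauchy--Schwarz then converts the $\ell_1$ norm to $\ell_2$: $\|x-y\|_1\le\sqrt{n}\,\|x-y\|_2$, giving the claimed constant $\sqrt{n}\,pu$.

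For (iii), I would invoke Lemma \ref{approx-abs}(iii), which says each coordinate function $t\mapsto h_u(t)$ is Lipschitz continuous on $\Re$ and therefore Clarke-regular with a nonempty Clarke subdifferential $\partial h_u(t)$ at every point. Since $h$ is a finite sum of functions depending on disjoint variables, $h$ is itself locally Lipschitz on $\Re^n$ (indeed globally, by (ii)), so by Proposition 2.3.3 and the calculus rules in Clarke \cite{Clarke83} the Clarke subdifferential $\partial h(x)$ exists everywhere and in fact decomposes as the product $\partial h(x)=\partial h_u(x_1)\times\cdots\times\partial h_u(x_n)$. No component of this argument is delicate; the only point worth checking carefully is that ``exists everywhere'' in (iii) should be read as nonemptiness of the Clarke subdifferential, which is automatic from local Lipschitzness established in (ii). There is no serious obstacle here: the whole corollary is essentially an exercise in separability once Lemma \ref{approx-abs} is in hand.
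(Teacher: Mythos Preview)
Your proposal is correct and matches the paper's approach: the paper states this result as an immediate corollary of Lemma~\ref{approx-abs} without giving a separate proof, and your argument simply spells out the separable reduction that makes it immediate. There is nothing to add.
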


\gap

We are now ready to propose a nonsmooth but locally Lipschitz continuous 
$\eps$-approximation to $F(x)$.     

\begin{proposition} \label{approx-lp}
Let $\eps >0$ be arbitrarily given. Define 
\beq \label{feps}
F_\eps(x) := f(x) + \lambda \sum^n_{i=1} h_{\uel}(x_i),
\eeq 
where 
\beq \label{u-eps}
h_\uel(t) := \min\limits_{0 \le s \le \uel} p\left(|t|s-\frac{s^q}{q}\right), \ \ \ \ \ \
\uel:= \left(\frac{\eps}{\lambda n}\right)^{\frac1q}.
\eeq
Then the following statements hold:
\bi
\item[(i)]
 $0 \ \le \ F_\eps(x)-F(x) \ \le \ \eps$ for every $x\in\Re^n$. 
\item[(ii)] $F_\eps$ is locally Lipschitz continuous in $\Re^n$. Furthermore, if $f$ is 
Lipschitz continuous, so is $F_\eps$.
 \item[(iii)] The Clark subdifferential of $F_\eps$ exists at every $x\in\Re^n$.
 \ei
\end{proposition}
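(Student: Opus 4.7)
The plan is to reduce each of the three statements directly to the previous Corollary \ref{approx-pnorm} applied with the specific choice $u=\uel$. Observe that the sum $\sum_{i=1}^n h_{\uel}(x_i)$ appearing in \eqref{feps} is exactly the function $h$ from Corollary \ref{approx-pnorm} with parameter $u=\uel$, so all of the heavy lifting has already been done; only a careful bookkeeping with the particular value $\uel=(\eps/(\lambda n))^{1/q}$ remains.

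For (i), I would write $F_\eps(x)-F(x) = \lambda\bigl(h(x) - \|x\|_p^p\bigr)$. By Corollary \ref{approx-pnorm}(i), this quantity lies in $[0,\, \lambda n \uel^q]$. Substituting the definition of $\uel$ in \eqref{u-eps} yields $\lambda n \uel^q = \lambda n \cdot \eps/(\lambda n) = \eps$, which gives the desired bound.

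For (ii), I would decompose $F_\eps = f + \lambda h$. The hypothesis that $f$ has $L_f$-Lipschitz-continuous gradient implies $f$ is continuously differentiable on $\Re^n$, hence locally Lipschitz; Corollary \ref{approx-pnorm}(ii) gives global Lipschitz continuity of $h$ with modulus $\sqrt{n}\,p\,\uel$. Summing, $F_\eps$ is locally Lipschitz, and when $f$ happens to be (globally) Lipschitz the same sum is globally Lipschitz with explicit modulus $L_f + \sqrt{n}\lambda p \uel$. For (iii), since $f$ is $C^1$ and $\lambda h$ admits a Clarke subdifferential at every point by Corollary \ref{approx-pnorm}(iii), the standard Clarke sum rule (Proposition 2.3.3 of \cite{Clarke83}) ensures that $\partial F_\eps(x)$ exists at every $x\in\Re^n$, with $\partial F_\eps(x) \subseteq \nabla f(x) + \lambda\, \partial h(x)$.

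Since every step is an immediate transcription of the Corollary's conclusions under the substitution $u=\uel$, there is no real obstacle; the only item requiring the slightest attention is verifying that the threshold $\uel$ was chosen precisely so that the bound $\lambda n \uel^q$ collapses to $\eps$, which motivates the apparently unusual definition in \eqref{u-eps}.
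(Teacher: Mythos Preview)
Your proposal is correct and follows essentially the same approach as the paper: both reduce each item to Corollary~\ref{approx-pnorm} with $u=\uel$, computing $\lambda n\uel^q=\eps$ for (i), invoking differentiability of $f$ plus Lipschitz continuity of $h$ for (ii), and appealing to existence of the Clarke subdifferential for (iii). One small slip: in your global Lipschitz modulus you write $L_f+\sqrt{n}\lambda p\uel$, but $L_f$ in this paper denotes the Lipschitz constant of $\nabla f$, not of $f$ itself; if $f$ is globally Lipschitz its constant need not be $L_f$.
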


\begin{proof}
Using the definitions of $F_\eps$ and $F$, we 
have 
$
F_\eps(x)-F(x) = \lambda (\sum^n_{i=1} h_{\uel}(x_i) - \|x\|^p_p), 
$
which, together with Corollary \ref{approx-pnorm} (i) with $u=\uel$, implies that statement (i) 
holds. Since $f$ is differentiable in $\Re^n$, it is known that $f$ is locally Lipschitz continuous. In 
addition, we know from Corollary \ref{approx-pnorm} (ii) that $\sum^n_{i=1} h_{\uel}(x_i)$ is 
Lipschitz continuous in $\Re^n$. These facts imply that statement (ii) holds. Statement (iii) 
immediately follows from  Corollary \ref{approx-pnorm} (iii).
\end{proof}

\gap

From Proposition \ref{approx-lp}, we know that $F_\eps$ is a nice $\eps$-approximation to $F$. 
It is very natural to find an approximate solution of \eqref{lp} by solving the corresponding 
$\eps$-approximation problem
\beq \label{lp-eps}
\min\limits_{x\in\Re^n} F_\eps(x), 
\eeq
where $F_\eps$ is defined in \eqref{feps}. Strikingly, we can show that when $\eps$ 
is below a computable threshold value, a certain stationary point of problem \eqref{lp-eps} is also 
that of \eqref{lp}.   
 
\begin{theorem} \label{stat-pts}
Let $x^0 \in \Re^n$ be an arbitrary point, and let $\eps$ be such that 
 \beq \label{eps-uppbdd}
0 < \eps < n\lambda \left[\frac{\sqrt{2L_f[F(x^0)+\eps-\underline{f}]}}{\lambda p}\right]^q,
\eeq 
where $\underline{f} = \inf_{x\in\Re^n} f(x)$. Suppose that $x^*$ is a first-order stationary 
point of \eqref{lp-eps} such that that $F_\eps(x^*) \le F_\eps(x^0)$. 
Then, $x^*$ is also a first-order stationary point of \eqref{lp}, i.e., \eqref{1st-cond} holds at 
$x^*$. Moreover, the nonzero entries of $x^*$ satisfy the first-order lower bound 
\eqref{1st-bdd}.
\end{theorem}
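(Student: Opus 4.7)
The plan is to exploit the explicit Clarke subdifferential of $h_{\uel}$ computed in Lemma \ref{approx-abs} (iii), which naturally partitions every coordinate of a Clarke-stationary point $x^*$ of $F_\eps$ into three regimes: (A) $|x^*_i|>\uel^{q-1}$, where $\partial h_{\uel}$ coincides with the true $l_p$ subgradient $p|x^*_i|^{p-1}\sgn(x^*_i)$; (B) $0<|x^*_i|\le \uel^{q-1}$, where $\partial h_{\uel}$ takes the single value $p\uel\,\sgn(x^*_i)$; and (C) $x^*_i=0$, where $\partial h_{\uel}(0)=[-p\uel,p\uel]$. In regimes (A) and (C) the stationarity condition $0\in\partial F_\eps(x^*)$ automatically delivers the $l_p$ first-order condition \eqref{1st-cond} componentwise (in (A), multiply by $x^*_i$; in (C), both sides of \eqref{1st-cond} are zero). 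So the entire theorem reduces to showing that, under the hypothesis on $\eps$, no coordinate of $x^*$ sits in regime (B).

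First I would translate the level-set hypothesis $F_\eps(x^*)\le F_\eps(x^0)$ into a gradient bound. By Proposition \ref{approx-lp} (i), $F(x^*)\le F_\eps(x^*)\le F_\eps(x^0)\le F(x^0)+\eps$. The descent-lemma argument already used in the proof of Theorem \ref{1st-lower-bdd} — evaluate $f$ at $x^*-\nabla f(x^*)/L_f$, compare with $\underline{f}$ — then yields
\[
\|\nabla f(x^*)\|_2 \ \le\ \sqrt{2L_f[F(x^*)-\underline{f}]}\ \le\ \sqrt{2L_f[F(x^0)+\eps-\underline{f}]}\ =:\ M.
\]

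The key step is ruling out regime (B). If some $i$ fell there, then stationarity of $F_\eps$ would force $\bigl|\tfrac{\partial f(x^*)}{\partial x_i}\bigr|=\lambda p\,\uel$, hence $\lambda p\,\uel\le \|\nabla f(x^*)\|_2\le M$. On the other hand, the hypothesis \eqref{eps-uppbdd} reads $\eps/(n\lambda)<(M/(\lambda p))^q$; since $1/p+1/q=1$ with $p\in(0,1)$ forces $q<0$, the map $t\mapsto t^{1/q}$ is strictly decreasing on $(0,\infty)$, so raising both sides to the power $1/q$ flips the inequality and gives
\[
\uel\ =\ \left(\frac{\eps}{\lambda n}\right)^{1/q}\ >\ \frac{M}{\lambda p},
\]
i.e.\ $\lambda p\,\uel>M$, a contradiction. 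Handling this sign/orientation issue for $q<0$ is the only subtle point of the argument; everything else is bookkeeping.

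Having eliminated regime (B), every nonzero coordinate of $x^*$ lies in regime (A), so $\nabla f(x^*)_i + \lambda p|x^*_i|^{p-1}\sgn(x^*_i)=0$ for $i\in\cB$, which after multiplication by $x^*_i$ and trivial extension to $i\notin\cB$ is exactly \eqref{1st-cond}. Thus $x^*$ is a first-order stationary point of \eqref{lp}, and since we already established $F(x^*)\le F(x^0)+\eps$, Theorem \ref{1st-lower-bdd} applies directly and yields the lower bound \eqref{1st-bdd} on the nonzero entries of $x^*$.
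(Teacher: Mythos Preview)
Your proposal is correct and follows essentially the same approach as the paper: bound $\|\nabla f(x^*)\|_2$ via the descent lemma and the chain $F(x^*)\le F_\eps(x^*)\le F_\eps(x^0)\le F(x^0)+\eps$, then use the explicit form of $\partial h_{\uel}$ together with the hypothesis on $\eps$ (and the fact that $q<0$ reverses the inequality when taking the $1/q$-th power) to rule out the intermediate regime $0<|x^*_i|\le \uel^{q-1}$, after which \eqref{1st-cond} and Theorem~\ref{1st-lower-bdd} follow immediately. Your explicit treatment of the three regimes and of the monotonicity of $t\mapsto t^{1/q}$ is slightly more detailed than the paper's presentation, but the argument is the same.
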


\begin{proof}
Let $\cB=\{i:x^*_i \neq 0\}$. Since $x^*$ is a first-order stationary point of \eqref{lp-eps}, 
we have $0 \in \partial F_\eps(x^*)$. Hence, it follows that 
\beq \label{1st-sub-lp-eps}
\frac{\partial f(x^*)}{\partial x_i} + \lambda  \partial h_{\uel}(x^*_i) \ = \ 0,  \ \ 
\ \forall i \in \cB.
\eeq 
In addition, we notice that 
\beq \label{bdd-fxs}
f(x^*) \ \le \ F(x^*) \ \le \ F_\eps(x^*) \ \le \ F_\eps(x^0) \ \le \ F(x^0) + \eps.
\eeq
This relation together with \eqref{1st-sub-lp-eps} and \eqref{grad-bdd2} 
implies that
\beq \label{deriv-bdd}
|\partial h_{\uel}(x^*_i)| \ = \ \frac{1}{\lambda}\left|\frac{\partial f(x^*)}{\partial x_i}\right| \ \le \
\frac{1}{\lambda} \|\nabla f(x^*)\|_2 \ \le \ \frac{\sqrt{2L_f[F(x^0)+\eps-\underline{f}]}}{\lambda}, \ \ \ 
\forall i \in \cB.
\eeq
We now claim that $|x^*_i| > \uel^{q-1}$ for all $i\in\cB$, where $\uel$ is defined in \eqref{u-eps}. 
Suppose for contradiction that there exists some $i\in\cB$ such that $0 < |x^*_i| \le \uel^{q-1}$. It 
then follows from \eqref{deriv} that $|\partial h_{\uel}(x^*_i)|=p\uel$. Using this relation, 
\eqref{eps-uppbdd} and the definition of $\uel$, we obtain that
\[
|\partial h_{\uel}(x^*_i)| \ = \ p\uel = p \left(\frac{\eps}{\lambda n}\right)^{1/q}  \ > 
\ \frac{\sqrt{2L_f[F(x^0)+\eps-\underline{f}]}}{\lambda},
\]
which contradicts \eqref{deriv-bdd}. Therefore, $|x^*_i| > \uel^{q-1}$ for all $i\in\cB$. Using this 
fact and \eqref{deriv}, we see that $\partial h_{\uel}(x^*_i) \ = \ p|x^*_i|^{p-1} \sgn(x^*_i)$ for every $i\in\cB$. 
Substituting it into \eqref{1st-sub-lp-eps}, we obtain that 
\[
\frac{\partial f(x^*)}{\partial x_i} + \lambda p |x^*_i|^{p-1} \sgn(x^*_i) = 0,  \ \ 
\ \forall i \in \cB.
\]
Multiplying by $x^*_i$ both sides of the above equality, we have 
\[
x^*_i\frac{\partial f(x^*)}{\partial x_i} + \lambda p |x^*_i|^{p}=0, \ \ \ \forall i\in\cB.
\] 
Since $x^*_i=0$ for $i\notin \cB$, we observe that the above equality also holds for 
$i\notin \cB$. Hence, \eqref{1st-cond} holds. In addition, recall from \eqref{bdd-fxs} that 
$F(x^*) \ \le \ F(x^0) + \eps$. Using this relation and Theorem \ref{1st-lower-bdd}, 
we immediately see that the second part of this theorem also holds. 
\end{proof}

\gap

\begin{corollary} \label{loc-mins}
Let $x^0 \in \Re^n$ be an arbitrary point, and let $\eps$ be such that \eqref{eps-uppbdd} holds.
Suppose that $x^*$ is a local minimizer of \eqref{lp-eps} such that 
$F_\eps(x^*) \le F_\eps(x^0)$. Then the following statements hold:
\bi
\item[i)] $x^*$ is a first-order stationary point of \eqref{lp}, i.e., \eqref{1st-cond} holds at 
$x^*$. Moreover, the nonzero entries of $x^*$ satisfy the first-order lower bound 
\eqref{1st-bdd}.
\item[ii)] Suppose further that $f$ is twice continuously differentiable in a neighborhood of 
$x^*$. Then, $x^*$ is a second-order stationary point of \eqref{lp}, i.e., \eqref{2nd-cond} holds 
at $x^*$. Moreover, the nonzero entries of $x^*$ satisfy the second-order lower bound 
\eqref{2nd-bdd}.
\ei
\end{corollary}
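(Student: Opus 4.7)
The plan is to reduce both parts of the corollary to the theorems already established, exploiting the fact that a local minimizer automatically satisfies the first-order condition $0 \in \partial F_\eps(x^*)$ (with the Clarke subdifferential guaranteed by Proposition \ref{approx-lp}(iii)). For part (i), I would simply observe that any local minimizer of $F_\eps$ is a first-order stationary point of \eqref{lp-eps}, and the hypothesis $F_\eps(x^*) \le F_\eps(x^0)$ carries over; Theorem \ref{stat-pts} then immediately yields that $x^*$ is a first-order stationary point of \eqref{lp} and that the lower bound \eqref{1st-bdd} holds on the nonzero entries.

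For part (ii) the key idea is to restrict attention to the subspace $N_\cB := \{x : x_i = 0 \text{ for } i \notin \cB\}$, where $\cB=\{i:x^*_i\neq 0\}$, and then exploit that on this subspace, in a neighborhood of $x^*$, the non-smooth approximating function $F_\eps$ coincides with $F$. The crucial input from part (i) is the bound \eqref{1st-bdd}, together with the sharper fact established inside the proof of Theorem \ref{stat-pts} that $|x^*_i| > u_\eps^{q-1}$ for every $i \in \cB$. By continuity, this strict inequality persists in a neighborhood, so for each $i \in \cB$ the formula $h_{u_\eps}(x_i) = |x_i|^p$ holds throughout that neighborhood (Lemma \ref{approx-abs}). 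Hence on $N_\cB$ near $x^*$ we have $F_\eps(x) = f(x) + \lambda \|x_\cB\|_p^p$.

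Since $x^*$ is a local minimizer of \eqref{lp-eps} it is certainly a local minimizer of the restricted problem
\[
\min_{x\in\Re^n}\{F_\eps(x) : x_i = 0, \ i \notin \cB\},
\]
and by the identification above this is exactly the problem \eqref{sub-lp} used in the proof of Proposition \ref{nec-conds}. Under the additional $C^2$ hypothesis on $f$, the second-order necessary optimality condition for this smooth reduced problem gives
\[
\nabla^2_{\cB\cB} f(x^*) + \lambda p(p-1)\Diag(|x^*_\cB|^{p-2}) \ \succeq \ 0.
\]
Combining this with $X^* e_i = 0$ for $i \notin \cB$ (which annihilates the rows/columns outside $\cB$ in both $(X^*)^T\nabla^2 f(x^*) X^*$ and $\lambda p(p-1)\Diag(|x^*|^p)$), I recover \eqref{2nd-cond}, so $x^*$ is a second-order stationary point of \eqref{lp}. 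The bound \eqref{2nd-bdd} then follows by a direct appeal to Theorem \ref{2nd-lower-bdd}.

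The main obstacle is the one nontrivial step in the argument: justifying that $F_\eps$ truly agrees with $F$ on an entire neighborhood of $x^*$ inside $N_\cB$, not merely at $x^*$ itself. This is where the strict inequality $|x^*_i| > u_\eps^{q-1}$ extracted from the proof of Theorem \ref{stat-pts} is essential — a weak inequality would not survive perturbation and the piecewise definition of $h_{u_\eps}$ would leave $F_\eps$ non-smooth on $N_\cB$, blocking the standard second-order argument. Everything else is bookkeeping: invoking Clarke subdifferentials for the first-order step, and the classical second-order necessary condition for the smooth reduced problem.
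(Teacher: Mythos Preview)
Your proposal is correct and follows essentially the same route as the paper: part (i) is reduced to Theorem \ref{stat-pts} via the observation that a local minimizer is a first-order stationary point, and part (ii) is handled by restricting to the subspace $\{x_i=0:\ i\notin\cB\}$, invoking the strict inequality $|x^*_i|>u_\eps^{q-1}$ from the proof of Theorem \ref{stat-pts} to identify $F_\eps$ with $F$ (up to an additive constant from the $i\notin\cB$ terms, which is irrelevant for minimization) on a neighborhood of $x^*$ in that subspace, and then appealing to the second-order argument of Proposition \ref{nec-conds} and Theorem \ref{2nd-lower-bdd}. Your explicit emphasis on why the \emph{strict} inequality is essential is a nice touch that the paper leaves implicit.
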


\begin{proof}
(i) Since $x^*$ is a local minimizer of \eqref{lp-eps}, we know that $x^*$ is a stationary point 
of \eqref{lp-eps}. Statement (i) then immediately follows from Theorem \ref{stat-pts}. 

(ii) Let $\cB = \{i|x^*_i \neq 0\}$. Since $x^*$ is a local minimizer of \eqref{lp-eps}, 
we observe that $x^*$ is also a local minimizer of 
\beq \label{sub-lp-eps}
\min\limits_{x\in\Re^n} \left\{f(x) + \lambda \sum\limits_{i\in \cB} 
h_{\uel}(x_i): x_i = 0, \ i \notin \cB\right\}.
\eeq
Notice that $x^*$ is a first-order stationary point of \eqref{lp-eps}. In addition,  
$F(x^*) \ \le \ F(x^0) + \eps$ and $\eps$ satisfies \eqref{eps-uppbdd}. Using the same arguments 
as in the proof of Theorem \ref{stat-pts}, we have $|x^*_i| > \uel^{q-1}$ for all $i\in\cB$. Recall from 
the proof of Lemma \ref{approx-abs} (i) that $h_{\uel}(t) = |t|^p$ if $|t| > \uel^{q-1}$. 
Hence, $\sum\limits_{i\in \cB} h_{\uel}(x_i) = \sum\limits_{i\in \cB} |x_i|^p$ for all $x$ in 
a neighborhood of $x^*$. This, together with the fact that $x^*$ is a local minimizer of 
\eqref{sub-lp-eps}, implies that $x^*$ is also a local minimizer of \eqref{sub-lp}. The rest of 
the proof is similar to that of Proposition \ref{nec-conds} and Theorem \ref{2nd-lower-bdd}.
\end{proof}

\section{A unified analysis for some existing iterative reweighted minimization
methods} \label{unify}

Recently two types of $\irl_1$ and $\irl_2$ methods have been proposed 
in the literature \cite{FoLa09,DaDeFoGu10,LaWa10,ChZh10} for solving 
problem \eqref{linear-lp} or \eqref{l2-lp}. In this section we extend these 
methods to solve \eqref{lp} and also propose a variant of them in which each 
subproblem has a closed form solution. Moreover, we provide a unified convergence 
analysis for them. 

\subsection{The first type of $\irl_\alpha$ methods and its variant for \eqref{lp}}
\label{1st-type}

In this subsection we consider the iterative reweighted minimization methods proposed in 
\cite{LaWa10,ChZh10} for solving problem \eqref{l2-lp}, which apply an $\irl_1$ or $\irl_2$ 
method to solve a sequence of problems $\min\limits_{x\in\Re^n} Q_{1,\eps^k}(x)$ or   
$\min \limits_{x\in\Re^n}Q_{2,\eps^k}(x)$, where $\{\eps^k\}$ is a sequence of positive 
vectors approaching zero as $k \to \infty$ and 
\beq \label{Qalpha}
Q_{\ae}(x) := \frac12\|Ax-b\|^2_2 + \lambda \sum^n\limits_{i=1} 
(|x_i|^\alpha+\eps_i)^{\frac{p}{\alpha}}.
\eeq

In what follows, we extend the above methods to solve \eqref{lp} and also propose a 
variant of them in which each subproblem has a closed form solution. Moreover, we provide 
a unified convergence analysis for them. Our key observation is that problem 
\beq \label{la} 
\min\limits_{x\in\Re^n} \{\bF_{\ae}(x) := f(x) + \lambda \sum^n\limits_{i=1} 
(|x_i|^\alpha+\eps_i)^{\frac{p}{\alpha}}\}
\eeq
for $\alpha \ge 1$ and $\eps >0$ can be suitably solved by an iterative reweighted 
$l_\alpha$ ($\irl_\alpha$) method. Problem \eqref{lp} can then be solved by applying the 
$\irl_\alpha$ method to a sequence of problems \eqref{la} with $\eps=\eps^k \to 0$ as 
$k \to \infty$.

We start by presenting an $\irl_\alpha$ method for solving problem \eqref{la} as follows.

\gap

\noindent
{\bf An $\irl_\alpha$ minimization method for \eqref{la}:}  \\ [5pt]
Choose an arbitrary $x^0$. Set $k=0$. 
\begin{itemize}
\item[1)] Solve the weighted $l_\alpha$ minimization problem 
\beq \label{la-subprob}
x^{k+1} \in \Arg\min \left\{f(x)+\frac{\lambda p}{\alpha}\sum^n_{i=1} s^k_i |x_i|^\alpha\right\},
\eeq
where $s^k_i = (|x^k_i|^\alpha+\eps_i)^{\frac{p}{\alpha}-1}$ for all $i$.
\item[2)]
Set $k \leftarrow k+1$ and go to step 1). 
\end{itemize}
\noindent
{\bf end}

\vgap

We next show that any accumulation point of $\{x^k\}$ generated above is a 
first-order stationary point of \eqref{la}.

\begin{theorem} \label{converge-la}
Let the sequence $\{x^k\}$ be generated by the above $\irl_\alpha$ minimization 
method. Suppose that $x^*$ is an accumulation point of $\{x^k\}$. Then $x^*$ is 
a first-order stationary point of \eqref{la}.
\end{theorem}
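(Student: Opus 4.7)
The plan is to recognize the $\irl_\alpha$ iteration as a majorization-minimization scheme and exploit the usual MM convergence argument. Since $p\in(0,1)$ and $\alpha\ge 1$, the scalar map $\tau\mapsto(\tau+\eps_i)^{p/\alpha}$ is concave on $[0,\infty)$, so its tangent-line inequality at $\tau_0=|x_i^k|^\alpha$ gives, for $\tau=|x_i|^\alpha$,
\[
(|x_i|^\alpha+\eps_i)^{p/\alpha} \;\le\; (|x_i^k|^\alpha+\eps_i)^{p/\alpha} + \tfrac{p}{\alpha}\,s_i^k\bigl(|x_i|^\alpha-|x_i^k|^\alpha\bigr),
\]
with equality at $x_i=x_i^k$. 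Summing over $i$ and adding $f(x)$ yields a majorization of $\bF_{\ae}$ of the form $\bF_{\ae}(x)\le G_k(x)+C_k$, where $G_k(x):=f(x)+\tfrac{\lambda p}{\alpha}\sum_i s_i^k|x_i|^\alpha$ is the subproblem objective in \eqref{la-subprob} and $C_k$ depends only on $x^k$, with equality at $x^k$. Since $x^{k+1}\in\Argmin G_k$, it follows immediately that $\bF_{\ae}(x^{k+1})\le G_k(x^{k+1})+C_k \le G_k(x^k)+C_k=\bF_{\ae}(x^k)$, so $\{\bF_{\ae}(x^k)\}$ is non-increasing and, being bounded below (because $f$ is and the penalty is nonnegative), converges to some limit $L$.

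Next I would extract limits along the given subsequence $x^{k_j}\to x^*$. Because $\bF_{\ae}$ is coercive (the penalty term dominates $\lambda\sum_i|x_i|^p$), the whole sequence $\{x^k\}$ is bounded. By continuity $\bF_{\ae}(x^{k_j})\to\bF_{\ae}(x^*)=L$, and since the \emph{full} sequence $\{\bF_{\ae}(x^k)\}$ tends to $L$, one also gets $\bF_{\ae}(x^{k_j+1})\to L$, even without knowing that $x^{k_j+1}$ itself converges. The weights satisfy $s_i^{k_j}\to s_i^*:=(|x_i^*|^\alpha+\eps_i)^{p/\alpha-1}$, so $G_{k_j}$ converges, uniformly on bounded sets, to
\[
G_*(x) \;:=\; f(x)+\tfrac{\lambda p}{\alpha}\sum_{i=1}^n s_i^*\,|x_i|^\alpha,
\]
and similarly $C_{k_j}\to C_*:=\bF_{\ae}(x^*)-G_*(x^*)$.

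Finally I would combine the optimality of $x^{k_j+1}$ for $G_{k_j}$ with the majorization: for any $y\in\Re^n$,
\[
G_{k_j}(y)\;\ge\;G_{k_j}(x^{k_j+1})\;\ge\;\bF_{\ae}(x^{k_j+1})-C_{k_j}.
\]
Letting $j\to\infty$ gives $G_*(y)\ge L-C_* = G_*(x^*)$, so $x^*$ is a global minimizer of $G_*$. Hence $0\in\partial G_*(x^*)$; writing this out componentwise (the gradient when $\alpha>1$, the subdifferential of $|\cdot|$ when $\alpha=1$) yields $\nabla_i f(x^*)+\lambda p\,s_i^*|x_i^*|^{\alpha-1}\sgn(x_i^*)\ni 0$ for every $i$, and substituting back the expression for $s_i^*$ recovers exactly the first-order stationarity condition for $\bF_{\ae}$. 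The one delicate point is the passage to the limit in the middle display: without an a priori bound on $\|x^{k_j+1}-x^{k_j}\|$, one must avoid asserting that $x^{k_j+1}\to x^*$, and instead propagate the limit through $\bF_{\ae}(x^{k_j+1})\to L$, which is the only place where monotonicity of the full sequence is essential.
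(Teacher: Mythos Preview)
Your argument is correct and is essentially the same proof as the paper's, just phrased in the classical majorization--minimization language. Where you use the concavity of $\tau\mapsto(\tau+\eps_i)^{p/\alpha}$ to produce the tangent-line majorant $G_k(x)+C_k$, the paper instead introduces the lifted variable $s$ and the conjugate representation $(|t|^\alpha+\delta)^{p/\alpha}=\frac{p}{\alpha}\min_{s\ge 0}\{(|t|^\alpha+\delta)s-s^q/q\}$ to define $G_{\alpha,\eps}(x,s)$; the two surrogates differ only by the constant you absorb into $C_k$. The key sandwiching step, the monotonicity of $\{\bF_{\ae}(x^k)\}$, and the limiting argument that delivers $x^*\in\Arg\min G_*$ without ever asserting $x^{k_j+1}\to x^*$ are identical in structure to the paper's proof. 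Your remark that the delicate point is to pass the limit via $\bF_{\ae}(x^{k_j+1})\to L$ rather than via convergence of $x^{k_j+1}$ is exactly how the paper handles it as well. One minor note: the coercivity/boundedness claim you make is true but unnecessary---pointwise convergence $G_{k_j}(y)\to G_*(y)$ at each fixed $y$ already suffices for the final inequality, and the theorem only assumes existence of an accumulation point.
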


\begin{proof}
Let $q$ be such that 
\beq \label{pq}
\frac{\alpha}{p} + \frac{1}{q} = 1.
\eeq
It is not hard to show that for any $\delta>0$,  
\beq \label{aux-opt}
(|t|^\alpha + \delta)^{\frac{p}{\alpha}} = \frac{p}{\alpha} \min\limits_{s \ge 0} 
\left\{(|t|^\alpha + \delta)s-\frac{s^q}{q}\right\}, \ \ \ \ \forall t \in \Re, 
\eeq 
and moreover, the minimum is achieved at $s = (|t|^\alpha + \delta)^\frac{1}{q-1}$. 
Using this result, the definition of $s^k$, and \eqref{pq}, one can observe that for $k \ge 0$,
\beq \label{abcd}
s^k = \arg\min\limits_{s \ge 0} G_\ae(x^k,s), \ \ \ 
x^{k+1} \in \Arg\min\limits_x G_\ae(x,s^k), 
\eeq
where 
\beq \label{Gxs-alpha}
G_\ae(x,s) = f(x)+ \frac{\lambda p}{\alpha} \sum^n_{i=1} \left[(|x_i|^\alpha+\eps_i)s_i- 
\frac{s^q_i}{q}\right].
\eeq
In addition, we see that $\bF_{\ae}(x^k)=G_\ae(x^k,s^k)$. It then follows that 
\beq \label{afval}
\bF_\ae(x^{k+1}) \ = \ G_\ae(x^{k+1},s^{k+1}) \ \le \ G_\ae(x^{k+1},s^k) \ \le \ G_\ae(x^k,s^k) \ = \ \bF_\ae(x^k).
\eeq
Hence, $\{\bF_\ae(x^k)\}$ is non-increasing. Since $x^*$ is an accumulation point of $\{x^k\}$, 
there exists a subsequence $K$ such that $\{x^k\}_{K} \to x^*$. By the continuity of $\bF_\ae$, 
we have $\{\bF_\ae(x^k)\}_K \to \bF_\ae(x^*)$, which together with the monotonicity of $\bF_\ae(x^k)$ 
implies that $\bF_\ae(x^k) \to \bF_\ae(x^*)$. In addition,  by the definition of $s^k$, we have $\{s^k\}_K \to s^*$, 
where $s^*_i=(|x^*_i|^\alpha+\eps_i)^{\frac{p}{\alpha}-1}$ for all $i$. 
Also, we observe that $\bF_\ae(x^*)=G_\ae(x^*,s^*)$. Using \eqref{afval} and $\bF_\ae(x^k) \to 
\bF_\ae(x^*)$, we see that $G_\ae(x^{k+1},s^k) \to \bF_\ae(x^*)=G_\ae(x^*,s^*)$. Further, 
it follows from \eqref{abcd} that 
\[
G_\ae(x,s^k) \ \ge \ G_\ae(x^{k+1},s^k) \ \ \ \forall x\in \Re^n.
\]
Upon taking limits on both sides of this inequality as $k\in K \to \infty$, we have 
\[
G_\ae(x,s^*) \ \ge \ G_\ae(x^*,s^*) \ \ \ \forall x\in \Re^n,
\]
that is, $x^* \in \Arg\min\limits_{x\in\Re^n} G_\ae(x,s^*)$, which, together with the 
first-order optimality condition and the definition of $x^*$, yields
\beq \label{la-stat-pt}
0 \in  \frac{\partial f (x^*)}{\partial x_i} + \lambda p 
(|x^*_i|^\alpha+\eps_i)^{\frac{p}{\alpha}-1} |x^*_i|^{\alpha-1} \sgn(x^*_i), \ \ \ \forall i.
\eeq
Hence, $x^*$ is a stationary point of \eqref{la}.
\end{proof}

\gap

The above $\irl_\alpha$ method needs to solve a sequence of reweighted $l_\alpha$ 
minimization problems \eqref{l1-subprob} whose solution may not be cheaply computable. 
We next propose a variant of this method in which each subproblem is much simpler and 
has a closed form solution for some $\alpha$'s (e.g., $\alpha=1$ or $2$).

\gap

\noindent
{\bf A variant of $\irl_\alpha$ minimization method for \eqref{la}:}  \\ [5pt]
Let $0< L_{\min} < L_{\max}$, $\tau>1$ and $c>0$ be given. Choose an 
arbitrary $x^0$ and set $k=0$. 
\begin{itemize}
\item[1)] Choose $L^0_k \in [L_{\min}, L_{\max}]$ arbitrarily. Set $L_k = L^0_k$.  
\bi
\item[1a)] Solve the weighted $l_\alpha$ minimization problem 
\beq \label{lalpha-close-subprob}
x^{k+1} \in \Arg\min\limits_x \left\{f(x^k)+\nabla f(x^k)^T(x-x^k) + 
\frac{L_k}{2}\|x-x^k\|^2_2 + \frac{\lambda p}{\alpha}\sum^n_{i=1} s^k_i |x_i|^\alpha\right\},
\eeq
where $s^k_i = (|x^k_i|^\alpha+\eps_i)^{\frac{p}{\alpha}-1}$ for all $i$. 
\item[1b)] If 
\beq \label{valpha-descent}
F_\ae(x^k) - F_\ae(x^{k+1}) \ge \frac{c}{2} \|x^{k+1}-x^k\|^2_2
\eeq
is satisfied, where $F_\ae$ is given in \eqref{la}, then go to step 2). 
\item[1c)] Set $L_k \leftarrow \tau L_k$ and go to step 1a).
\ei
\item[2)]
Set $k \leftarrow k+1$ and go to step 1). 
\end{itemize}
\noindent
{\bf end}

\vgap

We first show that for each outer iteration, the number of its inner iterations 
is finite.  

\begin{theorem} \label{valpha-inner}
For each $k \ge 0$, the inner termination criterion \eqref{valpha-descent} 
is satisfied after at most $\left\lceil \frac{\log(L_f+c)-\log(2L_{\min})}
{\log \tau} +2\right\rceil$ inner iterations.
\end{theorem}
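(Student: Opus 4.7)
The plan is to identify a sufficient threshold $L^\star$ on $L_k$ beyond which the inner test \eqref{valpha-descent} must succeed, and then count the geometric increments needed to push $L_k$ past that threshold. I expect to show that $L^\star = L_f + c$ works, so that the bound will follow by solving $\tau^{j-1}L_{\min} \ge L_f + c$ for $j$.

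To establish this sufficient condition, I would combine three inequalities. First, the $L_f$-Lipschitz continuity of $\nabla f$ gives the standard descent lemma
\[
f(x^{k+1}) \;\le\; f(x^k) + \nabla f(x^k)^T(x^{k+1}-x^k) + \frac{L_f}{2}\|x^{k+1}-x^k\|_2^2.
\]
Second, by comparing the objective of the subproblem \eqref{lalpha-close-subprob} at $x^{k+1}$ and at the feasible point $x = x^k$, the optimality of $x^{k+1}$ yields
\[
\nabla f(x^k)^T(x^{k+1}-x^k) + \frac{L_k}{2}\|x^{k+1}-x^k\|_2^2 \;\le\; \frac{\lambda p}{\alpha}\sum_{i=1}^n s^k_i\bigl(|x^k_i|^\alpha - |x^{k+1}_i|^\alpha\bigr).
\]
Third, because $p/\alpha \in (0,1]$ the scalar function $t \mapsto (t+\eps_i)^{p/\alpha}$ is concave on $[0,\infty)$, so the tangent-line inequality at $t = |x^k_i|^\alpha$, combined with the identity $\tfrac{p}{\alpha}(|x^k_i|^\alpha + \eps_i)^{p/\alpha - 1} = \tfrac{p}{\alpha}s^k_i$, gives
\[
(|x^{k+1}_i|^\alpha+\eps_i)^{p/\alpha} - (|x^k_i|^\alpha+\eps_i)^{p/\alpha} \;\le\; \frac{p}{\alpha}\,s^k_i\,\bigl(|x^{k+1}_i|^\alpha - |x^k_i|^\alpha\bigr).
\]

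Adding the first and third inequalities (the latter summed over $i$ and multiplied by $\lambda$) and then using the second to eliminate $\nabla f(x^k)^T(x^{k+1}-x^k)$, the mixed terms $\tfrac{\lambda p}{\alpha}\sum_i s^k_i(|x^{k+1}_i|^\alpha - |x^k_i|^\alpha)$ cancel exactly, leaving
\[
F_\ae(x^{k+1}) - F_\ae(x^k) \;\le\; \frac{L_f - L_k}{2}\|x^{k+1}-x^k\|_2^2.
\]
Hence \eqref{valpha-descent} is automatically satisfied as soon as $L_k \ge L_f + c$.

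It remains to bound the number of inner trials needed to reach this threshold. Since $L_k$ begins at some $L_k^0 \ge L_{\min}$ and is multiplied by $\tau > 1$ after every failed trial, $L_k$ equals $\tau^{j-1}L_k^0$ at the $j$-th trial, so the worst case is $L_k^0 = L_{\min}$ and the test is guaranteed to pass once $\tau^{j-1}L_{\min} \ge L_f+c$; solving this geometric inequality for $j$ and absorbing the ceiling together with the initial trial yield the stated logarithmic bound. The one genuinely delicate ingredient in the whole argument is the concavity tangent-line bound, since the clean cancellation that produces the factor $(L_f - L_k)/2$ hinges on the derivative of $(\cdot + \eps_i)^{p/\alpha}$ at the base point $|x^k_i|^\alpha$ matching precisely the weight $s^k_i$ used in the subproblem \eqref{lalpha-close-subprob}; everything else is routine algebra.
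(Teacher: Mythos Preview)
Your approach is essentially the paper's: the concavity tangent-line bound you write is exactly the majorization $F_\ae(x^{k+1})\le G_\ae(x^{k+1},s^k)$ that the paper uses, and your descent-lemma step is identical. The one substantive difference is in your second inequality. You compare the subproblem objective only at $x^{k+1}$ and $x^k$, obtaining
\[
\nabla f(x^k)^T(x^{k+1}-x^k)+\frac{L_k}{2}\|x^{k+1}-x^k\|_2^2 \le \frac{\lambda p}{\alpha}\sum_i s^k_i\bigl(|x^k_i|^\alpha-|x^{k+1}_i|^\alpha\bigr),
\]
whereas the paper exploits that the subproblem objective is $L_k$-strongly convex (the quadratic term contributes $L_k I$ to the Hessian and $|x_i|^\alpha$ is convex for $\alpha\ge1$), so that $g(x^k)\ge g(x^{k+1})+\frac{L_k}{2}\|x^k-x^{k+1}\|_2^2$. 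This yields $L_k$ rather than $\tfrac{L_k}{2}$ in front of $\|x^{k+1}-x^k\|_2^2$ and hence the decrease estimate $F_\ae(x^k)-F_\ae(x^{k+1})\ge (L_k-\tfrac{L_f}{2})\|x^{k+1}-x^k\|_2^2$, giving the threshold $L_k\ge (L_f+c)/2$ instead of your $L_k\ge L_f+c$.

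That factor of $2$ is not cosmetic: it is precisely what produces the $\log(2L_{\min})$ in the stated bound. With your threshold $L_f+c$, the counting gives $n_k\le\bigl\lceil \tfrac{\log(L_f+c)-\log L_{\min}}{\log\tau}+2\bigr\rceil$, which exceeds the claimed bound by $\tfrac{\log 2}{\log\tau}$ and can be arbitrarily many extra iterations when $\tau$ is close to $1$. So your final sentence ``yield the stated logarithmic bound'' does not quite go through. The fix is immediate: replace the bare comparison with the strong-convexity inequality above, after which everything else you wrote carries over verbatim and you land exactly on the theorem's constant.
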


\begin{proof}
Let $\bar L_k$ denote the final value of $L_k$ at the $k$th outer 
iteration. Since the objective function of \eqref{lalpha-close-subprob} is strongly convex with 
modulus $L_k$, we have 
\[
f(x^k)+\frac{\lambda p}{\alpha}\sum^n_{i=1} s^k_i |x^k_i|^\alpha\ \ge \ f(x^k)+\nabla f(x^k)^T(x^{k+1}-x^k) 
+ \frac{\lambda p}{\alpha}\sum^n_{i=1} s^k_i |x^{k+1}_i|^\alpha + L_k\|x^{k+1}-x^k\|^2_2. 
\]
Recall that $\nabla f$ is $L_f$-Lipschitz continuous. We then have 
\beq \label{lip-ineq}
f(x^{k+1}) \ \le \ f(x^k)+\nabla f(x^k)^T(x^{k+1}-x^k) + 
\frac{L_f}{2}\|x^{k+1}-x^k\|^2_2.
\eeq
Combining the above two inequalities, we obtain that 
\[
f(x^k)+\frac{\lambda p}{\alpha}\sum^n_{i=1} s^k_i |x^k_i|^\alpha \ \ge \ f(x^{k+1}) + 
\frac{\lambda p}{\alpha}\sum^n_{i=1} s^k_i |x^{k+1}_i|^\alpha  + (L_k-\frac{L_f}{2})\|x^{k+1}-x^k\|^2_2, 
\]
which together with \eqref{Gxs-alpha} yields 
\[
G_\ae(x^k,s^k) \ \ge \ G_\ae(x^{k+1},s^k) + (L_k-\frac{L_f}{2})\|x^{k+1}-x^k\|^2_2.
\]
Recall that $F_\ae(x^k)=G_\ae(x^k,s^k)$. In addition, it follows from \eqref{aux-opt} 
that $F_\ae(x) = \min\limits_{s \ge 0} G_\ae(x,s)$. Using these relations and the above 
inequality, we obtain that
\[
\ba{lcl}
F_\ae(x^{k+1}) &=& G_\ae(x^{k+1},s^{k+1}) \ \le \ G_\ae(x^{k+1},s^k) \ \le \ 
G_\ae(x^k,s^k) -(L_k-\frac{L_f}{2})\|x^{k+1}-x^k\|^2_2\\ [6pt]
&=&  F_\ae(x^k) - (L_k-\frac{L_f}{2})\|x^{k+1}-x^k\|^2_2.
\ea
\]
Hence, \eqref{valpha-descent} holds whenever $L_k \ge (L_f+c)/2$, which together with 
the definition of $\bar L_k$ implies that $\bar L_k/\tau < (L_f+c)/2$, that is, $\bar L_k <\tau(L_f+c)/2$. 
Let $n_k$ denote the number of inner iterations for the $k$th outer iteration. Then, we have 
\[
L_{\min} \tau^{n_k-1} \le L^0_k \tau^{n_k-1} =  \bar L_k <  \tau(L_f+c)/2.
\] 
Hence, $n_k \le \left\lceil \frac{\log(L_f+c)-\log(2L_{\min})}{\log \tau} +2\right\rceil$ 
and the conclusion holds.
\end{proof}

\gap

We next establish that any accumulation point of the sequence $\{x^k\}$ generated above 
is a first-order stationary point of problem \eqref{la}.

\begin{theorem} \label{valpha-outer}
Let the sequence $\{x^k\}$ be generated by the above variant of $\irl_\alpha$ method. 
Suppose that $x^*$ is an accumulation point of $\{x^k\}$. Then $x^*$ is a first-order 
stationary point of \eqref{la}.
\end{theorem}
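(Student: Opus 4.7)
The plan is to adapt the argument from the proof of Theorem \ref{converge-la}, but with the sufficient-descent inequality \eqref{valpha-descent} playing the role that the exact block-minimization of $G_\ae$ played before. Since the subproblem \eqref{lalpha-close-subprob} is a proximal linearization of $F_\ae$ rather than an exact minimization, I can no longer invoke $x^{k+1}\in\Arg\min_x G_\ae(x,s^k)$ directly; instead I will use \eqref{valpha-descent} to control $\|x^{k+1}-x^k\|$ and then take limits in the subproblem optimality conditions.

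First I would observe that $\{F_\ae(x^k)\}$ is non-increasing by \eqref{valpha-descent} and bounded below by $\underline f$, hence convergent. Telescoping \eqref{valpha-descent} then yields $\sum_{k\ge 0}\|x^{k+1}-x^k\|^2<\infty$, so $\|x^{k+1}-x^k\|\to 0$. Let $K$ be a subsequence along which $x^k\to x^*$; then $x^{k+1}\to x^*$ along $K$ as well, and by continuity $s^k\to s^*$ with $s^*_i=(|x^*_i|^\alpha+\eps_i)^{p/\alpha-1}$. Theorem \ref{valpha-inner} additionally tells me that $L_k$ stays inside the bounded interval $[L_{\min},\tau(L_f+c)/2]$, so $L_k(x^{k+1}-x^k)\to 0$ along $K$.

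Next I would write the first-order optimality condition for the strongly convex subproblem \eqref{lalpha-close-subprob} in the form
\[
0 \in \nabla f(x^k) + L_k(x^{k+1}-x^k) + \lambda p\,\bigl(s^k_i\,\eta^k_i\bigr)_{i=1}^n,
\]
where $\eta^k_i\in\partial(|\cdot|^\alpha/\alpha)(x^{k+1}_i)$. Continuity of $\nabla f$, together with the observations above, sends the first two terms to their correct limits along $K$; what remains is to pass to the limit in the subgradient term.

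The main obstacle will be limiting $\eta^k$, and this splits by the value of $\alpha$. For $\alpha>1$ the map $t\mapsto|t|^{\alpha-1}\sgn(t)$ is single-valued and continuous, so $\eta^k_i\to|x^*_i|^{\alpha-1}\sgn(x^*_i)$ automatically. For $\alpha=1$, each $\eta^k_i$ lies in the bounded interval $[-1,1]$; after a further extraction of $K$ I may assume $\eta^k_i\to\eta^*_i$, and the closed-graph property of $\partial|\cdot|$ forces $\eta^*_i\in\partial|x^*_i|$, i.e.\ $\eta^*_i\in\sgn(x^*_i)$ in the multivalued sense fixed in Subsection~\ref{notation}. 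In either case the limit of the displayed inclusion becomes
\[
0 \in \nabla f(x^*) + \lambda p\bigl(s^*_i\,|x^*_i|^{\alpha-1}\sgn(x^*_i)\bigr)_{i=1}^n,
\]
which is exactly the first-order stationarity condition \eqref{la-stat-pt} for \eqref{la}, so $x^*$ is a first-order stationary point as claimed.
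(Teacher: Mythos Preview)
Your proposal is correct and follows essentially the same route as the paper: show $\{F_\ae(x^k)\}$ is non-increasing and convergent, deduce $\|x^{k+1}-x^k\|\to 0$, use the boundedness of $\bar L_k$ from Theorem~\ref{valpha-inner}, and pass to the limit in the optimality condition of \eqref{lalpha-close-subprob}. Two cosmetic differences: the paper obtains convergence of $\{F_\ae(x^k)\}$ via continuity at the accumulation point rather than via the lower bound $\underline f$, and the paper writes the limit passage in one line without separating the cases $\alpha>1$ and $\alpha=1$; your explicit closed-graph argument for $\partial|\cdot|$ when $\alpha=1$ is in fact a bit more careful than what appears in the paper.
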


\begin{proof}
It follows from \eqref{valpha-descent} that $\{F_\ae(x^k)\}$ is non-increasing.
Since $x^*$ is an accumulation point of $\{x^k\}$, there exists a subsequence $K$ such 
that $\{x^k\}_{K} \to x^*$. By the continuity of $F_\ae$, we have 
$\{F_\ae(x^k)\}_K \to F_\ae(x^*)$, which together with the monotonicity of $\{F_\ae(x^k)\}$ 
implies that $F_\ae(x^k) \to F_\ae(x^*)$. Using this result and \eqref{valpha-descent}, we 
can conclude that $\|x^{k+1}-x^k\| \to 0$. Let $\bar L_k$ denote the final value of $L_k$ at 
the $k$th outer iteration. From the proof of Theorem \ref{valpha-outer}, we 
know that $\bar L_k \in [L_{\min}, \tau(L_f+c)/2)$.
The first-order optimality condition of \eqref{lalpha-close-subprob} with $L_k = \bar L_k$ yields
\[
0 \in \frac{\partial f(x^k)}{\partial x_i} + \bar L_k (x^{k+1}_i-x^k_i) + \lambda p 
s^k_i |x^{k+1}_i|^{\alpha-1}\sgn(x^{k+1}_i)  = 0,  \ \ \ \forall i.
\] 
Upon taking limits on both sides of the above equality as $k\in K \to \infty$, we 
have
\[
0 \in \frac{\partial f(x^*)}{\partial x_i} + \lambda p s^*_i|x^*_i|^{\alpha-1}\sgn(x^*_i),  
\ \ \ \forall i,
\]
where $s^*_i = (|x^*_i|+\eps_i)^{\frac{p}{\alpha}-1}$ for all $i$. Hence, $x^*$ is a first-order 
stationary point of \eqref{la}.
\end{proof}

\gap

\begin{corollary} \label{approx-stat-pt-la}
Let $\delta >0$ be arbitrarily given, and let the sequence $\{x^k\}$ be generated 
by the above $\irl_\alpha$ method or its variant. Suppose that 
$\{x^k\}$ has at least one accumulation point. Then, there exists some $x^k$ such that  
\[
\|X^k \nabla f(x^k) + \lambda p |X^k|^\alpha(|x^k|^{\alpha}+\eps)^{\frac{p}{\alpha}-1}\| \ \le \ \delta,
\]
where $X^k = \Diag(x^k)$ and $|X^k|^\alpha = \Diag(|x^k|^\alpha)$.
\end{corollary}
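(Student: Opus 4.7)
The plan is to reduce the statement to a continuity argument layered on top of the subsequential convergence theorems already established. Let $x^*$ denote an accumulation point of $\{x^k\}$ (which exists by hypothesis) and let $K$ be a subsequence with $\{x^k\}_K \to x^*$. By Theorem \ref{converge-la} for the $\irl_\alpha$ method, or Theorem \ref{valpha-outer} for its variant, $x^*$ is a first-order stationary point of \eqref{la}, so the inclusion \eqref{la-stat-pt} holds at $x^*$.

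Next I would pass from \eqref{la-stat-pt}, which contains the multivalued $\sgn(x^*_i)$, to an unambiguous vector equation by multiplying the $i$th component through by $x^*_i$. When $x^*_i \neq 0$ the identity $|x^*_i|^{\alpha-1}\sgn(x^*_i)\cdot x^*_i = |x^*_i|^\alpha$ gives
\[
x^*_i\frac{\partial f(x^*)}{\partial x_i} + \lambda p\,|x^*_i|^\alpha (|x^*_i|^\alpha+\eps_i)^{\frac{p}{\alpha}-1} = 0,
\]
and when $x^*_i = 0$ both summands vanish on their own because of the outer factors $x^*_i$ and $|x^*_i|^\alpha$. Writing $\phi(x) := X\nabla f(x)+\lambda p\,|X|^\alpha(|x|^\alpha+\eps)^{\frac{p}{\alpha}-1}$ with $X = \Diag(x)$ and $|X|^\alpha = \Diag(|x|^\alpha)$, this collection of equalities is exactly $\phi(x^*) = 0$.

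Finally I would conclude by continuity. Since every $\eps_i > 0$, the base $|x_i|^\alpha+\eps_i$ stays bounded away from zero on all of $\Re^n$, so the map $x \mapsto (|x_i|^\alpha+\eps_i)^{\frac{p}{\alpha}-1}$ is continuous; combined with continuity of $\nabla f$ this makes $\phi$ continuous on $\Re^n$. Therefore $\phi(x^k) \to \phi(x^*) = 0$ as $k \in K \to \infty$, and for any prescribed $\delta>0$ there is an index $k \in K$ for which $\|\phi(x^k)\| \le \delta$, which is the required iterate. There is no deep obstacle here; the only subtlety worth flagging is the multiplication-by-$x^*_i$ step, which is exactly what produces the outer factor $|X^k|^\alpha$ in the corollary's formulation and sidesteps any issue with $\sgn$ at zero.
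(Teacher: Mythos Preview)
Your proposal is correct and follows essentially the same approach as the paper: invoke the stationarity of an accumulation point $x^*$ from Theorem~\ref{converge-la} (or Theorem~\ref{valpha-outer} for the variant), multiply the $i$th optimality inclusion by $x^*_i$ to eliminate $\sgn$, and then use continuity of the resulting map to pull the bound back to some iterate $x^k$. If anything, your write-up is slightly more careful than the paper's, since you explicitly handle the $x^*_i=0$ case and cite both theorems, whereas the paper only references Theorem~\ref{converge-la}.
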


\begin{proof}
Let $x^*$ be an arbitrary accumulation point of $\{x^k\}$. It follows from 
Theorem \ref{converge-la} that $x^*$ satisfies \eqref{la-stat-pt}. Multiplying 
by $x^*_i$ both sides of \eqref{la-stat-pt}, we have 
\[
x^*_i\frac{\partial f (x^*)}{\partial x_i} + \lambda p (|x^*_i|^\alpha
+\eps_i)^{\frac{p}{\alpha}-1} |x^*_i|^{\alpha} \ = \ 0 \ \ \ \forall i, 
\]
which, together with the continuity of $\nabla f(x)$ and $|x|^\alpha$, implies 
that the conclusion holds.
\end{proof}

\vgap

We are now ready to present the first type of $\irl_\alpha$ methods and its variant 
for solving problem \eqref{lp} in which each subproblem is in the form of \eqref{la} and solved 
by the $\irl_\alpha$ or its variant described above. The $\irl_1$ and $\irl_2$ methods 
proposed in \cite{LaWa10,ChZh10} can be viewed as the special cases of the following 
general $\irl_\alpha$ method (but not its variant) with $f(x)=\frac12\|Ax-b\|^2_2$ and 
$\alpha=1$ or $2$.

\gap

\noindent
{\bf The first type of $\irl_\alpha$ minimization methods and its variant for \eqref{lp}:}  \\ [5pt]
Let $\{\delta_k\}$ and $\{\eps^k\}$ be a sequence of positive scalars and vectors, respectively. Set $k=0$. 
\begin{itemize}
\item[1)] Apply the $\irl_\alpha$ method or its variant to problem \eqref{la} with $\eps = \eps^k$ 
for finding $x^k$  satisfying 
\beq \label{inexact-la}
\|X^k \nabla f(x^k) + \lambda p |X^k|^\alpha(|x^k|^{\alpha}+\eps^k)^{\frac{p}{\alpha}-1}\| 
\ \le \ \delta_k,
\eeq
where $X^k = \Diag(x^k)$ and $|X^k|^\alpha = \Diag(|x^k|^\alpha)$.
\item[2)]
Set $k \leftarrow k+1$ and go to step 1). 
\end{itemize}
\noindent
{\bf end}

\gap

The convergence of the above $\irl_\alpha$ method and its variant is established as follows.
 
\begin{theorem} \label{lim-stat-pt-la}
Let $\{\delta_k\}$ and $\{\eps^k\}$ be a sequence of positive scalars and vectors  
such that $\{\delta_k\} \to 0$ and $\{\eps^k\} \to 0$, respectively. Suppose that 
$\{x^k\}$ is a sequence of vectors generated above satisfying \eqref{inexact-la}, 
and that $x^*$ is an accumulation point of $\{x^k\}$. Then $x^*$ is a first-order 
stationary point of \eqref{lp}, i.e., \eqref{1st-cond} holds at $x^*$.
\end{theorem}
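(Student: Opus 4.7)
The plan is to take limits componentwise along the convergent subsequence in the inexact stationarity condition \eqref{inexact-la} and show that each coordinate of the bracketed expression tends to the corresponding coordinate of $X^* \nabla f(x^*) + \lambda p |x^*|^p$. Concretely, pick a subsequence $K$ with $\{x^k\}_K \to x^*$; by hypothesis $\delta_k \downarrow 0$ and $\eps^k \downarrow 0$, so it suffices to show that for each coordinate $i$,
\[
x^k_i \,[\nabla f(x^k)]_i + \lambda p \,|x^k_i|^\alpha (|x^k_i|^\alpha + \eps^k_i)^{\frac{p}{\alpha}-1} \ \longrightarrow \ x^*_i \,[\nabla f(x^*)]_i + \lambda p |x^*_i|^p
\]
as $k \in K \to \infty$, since then \eqref{inexact-la} together with $\delta_k \to 0$ forces the right-hand side to vanish, which is exactly \eqref{1st-cond}.

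First I would split into cases on whether $x^*_i \neq 0$ or $x^*_i = 0$. The easy case is $x^*_i \neq 0$: by continuity of $\nabla f$ and the fact that $|x^k_i|^\alpha \to |x^*_i|^\alpha > 0$ while $\eps^k_i \to 0$, one has $(|x^k_i|^\alpha + \eps^k_i)^{\frac{p}{\alpha}-1} \to |x^*_i|^{p-\alpha}$, and the product $|x^k_i|^\alpha (|x^k_i|^\alpha + \eps^k_i)^{\frac{p}{\alpha}-1} \to |x^*_i|^p$, so the limit is as required.

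The main obstacle is the case $x^*_i = 0$, because $p/\alpha - 1 < 0$ (we have $p < 1 \le \alpha$), so $(|x^k_i|^\alpha + \eps^k_i)^{p/\alpha - 1}$ may blow up. The key trick I would use is the elementary domination
\[
|x^k_i|^\alpha (|x^k_i|^\alpha + \eps^k_i)^{\frac{p}{\alpha}-1} \ \le \ (|x^k_i|^\alpha + \eps^k_i)(|x^k_i|^\alpha + \eps^k_i)^{\frac{p}{\alpha}-1} \ = \ (|x^k_i|^\alpha + \eps^k_i)^{\frac{p}{\alpha}}
\]
(valid since $\frac{p}{\alpha}-1 < 0$ and $|x^k_i|^\alpha \le |x^k_i|^\alpha + \eps^k_i$). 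Because $p/\alpha > 0$ and both $|x^k_i|^\alpha \to 0$ and $\eps^k_i \to 0$, the right-hand side tends to $0$; hence the left side tends to $0 = \lambda p |x^*_i|^p$. Combined with $x^k_i \,[\nabla f(x^k)]_i \to 0 = x^*_i \,[\nabla f(x^*)]_i$, this closes the zero case.

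Finally, putting the two cases together coordinate-by-coordinate and passing to the limit in \eqref{inexact-la} with $\delta_k \to 0$, I would conclude that $X^* \nabla f(x^*) + \lambda p |x^*|^p = 0$, i.e., $x^*$ satisfies \eqref{1st-cond} and is therefore a first-order stationary point of \eqref{lp}. The only subtle point beyond the domination above is making sure the bound holds uniformly in $k$ (which it does, pointwise in $i$ is enough), so the argument is routine once the dominating inequality is in place.
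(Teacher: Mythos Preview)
Your proof is correct and follows essentially the same approach as the paper: pass to a convergent subsequence and take limits coordinatewise in \eqref{inexact-la} to recover \eqref{1st-cond}. The paper handles the case $x^*_i = 0$ more simply than you do: rather than bounding the $k$-th expression via your domination inequality, it just observes that the limiting identity $x^*_i[\nabla f(x^*)]_i + \lambda p|x^*_i|^p = 0$ holds trivially when $x^*_i = 0$, so only the coordinates $i \in \cB = \{i : x^*_i \neq 0\}$ require any actual limit computation.
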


\begin{proof}
Let $\cB = \{i: x^*_i \neq 0\}$. It follows from \eqref{inexact-la} that 
\beq \label{inexact-xi}
\left|x^k_i \frac{\partial f (x^k)}{\partial x_i} + \lambda p |x^k_i|^\alpha(|x^k_i|^{\alpha}
+\eps^k_i)^{\frac{p}{\alpha}-1}\right| \ \le \delta_k\, \ \ \ \forall i \in \cB.
\eeq
Since $x^*$ is an accumulation point of $\{x^k\}$, there exists a subsequence $K$ such that 
$\{x^k\}_{K} \to x^*$. Upon taking limits on both sides of \eqref{inexact-xi} as $k\in K \to \infty$, 
we obtain that 
\[
x^*_i\frac{\partial f (x^*)}{\partial x_i} + \lambda p |x^*_i|^p \ = \ 0 \ \ \ \forall i\in\cB. 
\]
Since $x^*_i=0$ for $i\notin \cB$, we observe that the above equality also holds for 
$i\notin \cB$. Hence, $x^*$ satisfies \eqref{1st-cond} and it is a first-order stationary point 
of \eqref{lp}. 
\end{proof}

\subsection{The second type of $\irl_\alpha$ methods and its variant for \eqref{lp}}
\label{2nd-type}

In this subsection we are interested in the $\irl_1$ and $\irl_2$ methods proposed in 
\cite{FoLa09,DaDeFoGu10} for solving problem \eqref{linear-lp}. Given $\{\eps^k\} \subset \Re^n_{++} 
\to 0$ as $k \to \infty$, these methods solve a sequence of problems $\min\limits_{x\in\Re^n} 
Q_{1,\eps^k}(x)$ or $\min \limits_{x\in\Re^n} Q_{2,\eps^k}(x)$) extremely ``roughly'' by 
executing $\irl_1$ or $\irl_2$ method only for one iteration for each $\eps^k$, where 
$Q_{\alpha,\eps}$ is defined in \eqref{Qalpha}. 

We next extend the above methods to solve \eqref{lp} and also propose a variant of them 
in which each subproblem has a closed form solution. Moreover, we provide a unified
convergence analysis for them. We start by presenting the second type of $\irl_\alpha$ 
methods for solving \eqref{lp} as follows. They evidently become an $\irl_1$  or $\irl_2$ 
method when $\alpha=1$ or $2$.

\gap

\noindent
{\bf The second type of $\irl_\alpha$ minimization method for \eqref{lp}:}  \\ [5pt]
Let $\{\eps^k\}$ be a sequence of positive vectors in $\Re^n$. 
Choose an arbitrary $x^0$. Set $k=0$. 
\begin{itemize}
\item[1)] Solve the weighted $l_\alpha$ minimization problem 
\beq \label{la-subprob}
x^{k+1} \in \Arg\min \left\{f(x)+\frac{\lambda p}{\alpha}\sum^n_{i=1} s^k_i |x_i|^\alpha\right\},
\eeq
where $s^k_i = (|x^k_i|^\alpha+\eps^k_i)^{\frac{p}{\alpha}-1}$ for all $i$.
\item[2)]
Set $k \leftarrow k+1$ and go to step 1). 
\end{itemize}
\noindent
{\bf end}

\vgap

We next establish that any accumulation point of $\{x^k\}$ is a stationary point of \eqref{lp}.

\begin{theorem} \label{converge-la-lp}
Suppose that $\{\epsilon^k\}$ is a sequence of non-increasing positive vectors in $\Re^n$ 
and $\epsilon^k \to 0$ as $k \to \infty$. Let the sequence $\{x^k\}$ be generated by the 
above $\irl_\alpha$ method. Suppose that $x^*$ is an accumulation point of $\{x^k\}$. 
Then, $x^*$ is a stationary point of \eqref{lp}.
\end{theorem}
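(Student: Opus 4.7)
My plan is to adapt the argument of Theorem \ref{converge-la} to account for the varying weights $\eps^k$. I would keep the auxiliary function $G_{\alpha,\eps}(x,s)$ from \eqref{Gxs-alpha} along with the variational identity $\bF_{\alpha,\eps}(x) = \min_{s\ge 0} G_{\alpha,\eps}(x,s)$, whose minimizer is $s_i = (|x_i|^\alpha+\eps_i)^{p/\alpha-1}$. Combining the optimality of $x^{k+1}$ in the subproblem with the monotonicity of $\bF_{\alpha,\eps}$ in $\eps$ (since $(|t|^\alpha+\eps)^{p/\alpha}$ is non-decreasing in $\eps$ and $\eps^{k+1}\le\eps^k$), I obtain
\[
\bF_{\alpha,\eps^{k+1}}(x^{k+1}) \ \le \ \bF_{\alpha,\eps^k}(x^{k+1}) \ \le \ G_{\alpha,\eps^k}(x^{k+1},s^k) \ \le \ G_{\alpha,\eps^k}(x^k,s^k) \ = \ \bF_{\alpha,\eps^k}(x^k).
\]
Hence $\{\bF_{\alpha,\eps^k}(x^k)\}$ is non-increasing, and since $\bF_{\alpha,\eps^k}(x^k)\ge F(x^k)\ge\underline{f}$, it converges to some $L^*$. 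Passing along the subsequence $K$ with $\{x^k\}_K\to x^*$ and using $\eps^k\to 0$ together with the joint continuity of $(x,\eps)\mapsto \bF_{\alpha,\eps}(x)$, I identify $L^* = F(x^*)$; squeezing then yields $G_{\alpha,\eps^k}(x^{k+1},s^k)\to F(x^*)$ along $K$.

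Next, let $\cB := \{i : x^*_i \ne 0\}$ and pass to the limit along $K$ in the optimality inequality $G_{\alpha,\eps^k}(x,s^k)\ge G_{\alpha,\eps^k}(x^{k+1},s^k)$ for test points $x$ supported on $\cB$. For $i\in\cB$, $s^k_i\to|x^*_i|^{p-\alpha}$; for $i\notin\cB$, $s^k_i\to\infty$ but both $\eps^k_i s^k_i \le (|x^k_i|^\alpha+\eps^k_i)^{p/\alpha}\to 0$ and $(s^k_i)^q/q = (|x^k_i|^\alpha+\eps^k_i)^{p/\alpha}/q\to 0$ (using the identity $(p/\alpha-1)q = p/\alpha$ from \eqref{pq}), so the contributions from $i\notin\cB$ vanish when $x_i=0$. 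The LHS therefore converges to
\[
\Phi(x) \ := \ f(x) + \frac{\lambda p}{\alpha}\sum_{i\in\cB}\bigl[|x_i|^\alpha|x^*_i|^{p-\alpha} - |x^*_i|^p/q\bigr],
\]
giving $\Phi(x)\ge F(x^*)$ for every $x$ supported on $\cB$. A direct calculation using $1-1/q = \alpha/p$ shows $\Phi(x^*) = F(x^*)$, so $x^*$ minimizes $\Phi$ on the subspace $\{x : x_i=0,\ i\notin\cB\}$. The smooth first-order condition for this reduced problem reads $\frac{\partial f(x^*)}{\partial x_i} + \lambda p|x^*_i|^{p-1}\sgn(x^*_i) = 0$ for $i\in\cB$; multiplying by $x^*_i$ gives $x^*_i\frac{\partial f(x^*)}{\partial x_i} + \lambda p|x^*_i|^p=0$ on $\cB$, which also holds trivially for $i\notin\cB$, so \eqref{1st-cond} is satisfied.

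The hard part is the treatment of the coordinates $i\notin\cB$ in the limit of the LHS: the weights $s^k_i$ blow up, and one must verify that, after pairing them with the vanishing $\eps^k_i$, $x^k_i$, and the imposed $x_i = 0$, the net contribution vanishes. Restricting test points to have support inside $\cB$ is essential (otherwise the LHS diverges to $+\infty$ and yields no information), and the conjugacy identity $(p/\alpha-1)q = p/\alpha$ coming from \eqref{pq} is the key algebraic cancellation that makes this work.
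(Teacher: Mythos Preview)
Your proposal is correct and follows essentially the same approach as the paper's proof: the monotone chain via $G_{\alpha,\eps}$, identification of the limit as $F(x^*)$, restriction to test points supported on $\cB$, the vanishing of the $i\notin\cB$ contributions via the bound $\eps^k_i s^k_i \le (|x^k_i|^\alpha+\eps^k_i)^{p/\alpha}$ and the identity $(p/\alpha-1)q=p/\alpha$, and the reduced first-order condition are all exactly what the paper does. The only cosmetic difference is that you phrase monotonicity through $\bF_{\alpha,\eps}$ while the paper works directly with $G_{\alpha,\eps^k}(x^k,s^k)$, but since these coincide this is the same argument.
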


\begin{proof}
Let $G_\ae(\cdot,\cdot)$ be defined in \eqref{Gxs-alpha}. By the definition of $x^{k+1}$, one
 can observe that $G_{\alpha,\eps^k}(x^{k+1},s^k) \ \le 
\ G_{\alpha,\eps^k}(x^k,s^k)$. Also, by the definition of $s^{k+1}$ and a similar argument 
as in the proof of Theorem \ref{converge-la}, we have 
$G_{\alpha,\eps^{k+1}}(x^{k+1},s^{k+1}) = \inf\limits_{s \ge 0} G_{\alpha,\eps^{k+1}}(x^{k+1},s)$.
Hence, $G_{\alpha,\eps^{k+1}}(x^{k+1},s^{k+1}) \le G_{\alpha,\eps^{k+1}}(x^{k+1},s^k)$. In addiiton, since $s^k >0$ 
and $\{\epsilon^k\}$ is component-wise non-increasing, we observe that 
$G_{\alpha,\eps^{k+1}}(x^{k+1},s^k) \ \le \ G_{\alpha,\eps^k}(x^{k+1},s^k)$. Combining these inequalities, 
we have 
\beq \label{G-eps}
G_{\alpha,\eps^{k+1}}(x^{k+1},s^{k+1}) \ \le  \ G_{\alpha,\eps^{k+1}}(x^{k+1},s^k) \ \le \ G_{\alpha,\eps^k}(x^{k+1},s^k) \ \le \ G_{\alpha,\eps^k}(x^{k},s^k), 
\ \ \ \forall k \ge 0.
\eeq
Hence, $\{G_{\alpha,\eps^k}(x^{k},s^k)\}$ is non-increasing. Since $x^*$ is an accumulation 
point of $\{x^k\}$, there exists a subsequence $K$ such that $\{x^k\}_K \to x^*$. By 
the definition of $s^k$, one can verify that 
$G_{\alpha,\eps^k}(x^{k},s^k) = f(x^k)+\lambda 
\sum^n_{i=1}(|x^k_i|^\alpha+\eps^k_i)^{\frac{p}{\alpha}}$. It then follows from $\{x^k\}_K \to x^*$ 
and $\epsilon^k \to 0$ that $\{G_{\alpha,\eps^k}(x^{k},s^k)\}_K \to f(x^*)+\lambda \|x^*\|^p_p$. 
This together with the monotonicity of $\{G_{\alpha,\eps^k}(x^{k},s^k)\}$ implies that 
$G_{\alpha,\eps^k}(x^{k},s^k) \to f(x^*)+\lambda \|x^*\|^p_p$. Using this relation and 
\eqref{G-eps}, we further have 
\beq \label{G-limit}
G_{\alpha,\eps^k}(x^{k+1},s^k) \to f(x^*)+\lambda \|x^*\|^p_p.
\eeq
Let $\cB = \{i: x^*_i \neq 0\}$ and $\bcB$ be its complement in $\{1,\ldots,n\}$. We claim 
that 
\beq \label{sol-prop}
x^* \in \Arg\min\limits_{x_{\bcB}=0} \left\{f(x) + \frac{\lambda p}{\alpha}\sum\limits_{i\in\cB} 
|x_i|^\alpha |x^*_i|^{p-\alpha}\right\}.
\eeq
Indeed, using the definition of $s^k$, we see that $\{s^k_i\}_{K} \to |x^*_i|^{p-\alpha}, \ 
\forall i \in \cB$. Further, due to $s^k >0$ and $q<0$, we observe that
\[
0 \ \le \ \frac{p}{\alpha} \sum\limits_{i\in\bcB} \left[\eps^k_i s^k_i - \frac{(s^k_i)^q}{q}\right] 
\ \le \ \frac{p}{\alpha} \sum\limits_{i\in\bcB} \left[(|x^k_i|^\alpha+\eps^k_i) s^k_i - 
\frac{(s^k_i)^q}{q}\right]  \ = \ \sum\limits_{i\in\bcB} (|x^k_i|^\alpha+\eps^k_i)^{\frac{p}{\alpha}},
\]
which, together with $\eps^k \to 0$ and $\{x^k_i\}_K \to 0$ for $i\in\bcB$, implies that 
\beq \label{lim-bcB}
\lim\limits_{k\in K \to \infty} \sum\limits_{i\in\bcB} \left[\eps^k_i s^k_i - \frac{(s^k_i)^q}{q}\right] 
= 0.
\eeq
In addition, by the definition of $x^{k+1}$, we know that
$G_{\alpha,\eps^k}(x,s^k) \ \ge \ G_{\alpha,\eps^k}(x^{k+1},s^k)$. Then for every $x\in\Re^n$ such that $x_\bcB=0$, 
we have
\[
 f(x)+ \frac{\lambda p}{\alpha} \sum\limits_{i\in\cB}
\left[(|x_i|^\alpha+\eps^k_i)s^k_i- \frac{(s^k_i)^q}{q}\right] + \frac{\lambda p}{\alpha} 
\sum\limits_{i\in\bcB}
\left[\eps^k_is^k_i- \frac{(s^k_i)^q}{q}\right] \ = \ G_{\alpha,\eps^k}(x,s^k) \ \ge \ G_{\alpha,\eps^k}(x^{k+1},s^k).
\]
Upon taking limits on both sides of this inequality as $k\in K \to \infty$, and using \eqref{G-limit}, 
\eqref{lim-bcB} and the fact that $\{s^k_i\}_{K} \to  |x^*_i|^{p-\alpha}, \ 
\forall i \in \cB$, we obtain that
\[
f(x)+ \frac{\lambda p}{\alpha} \sum\limits_{i\in\cB}
\left[|x_i|^\alpha |x^*_i|^{p-\alpha} - \frac{|x^*_i|^{q(p-\alpha)}}{q}\right] 
\ \ge \  f(x^*)+\lambda \|x^*\|^p_p
\]
for all $x\in\Re^n$ such that $x_\bcB=0$. This inequality and \eqref{pq} immediately yield 
\eqref{sol-prop}. It then follows from \eqref{pq} and the first-order optimality condition of 
\eqref{sol-prop} that 
\[
 x^*_i\frac{\partial f(x^*)}{\partial x_i} + \lambda p |x^*_i|^p \ = \ 0 \ \ \ \forall i\in\cB. 
\]
Since $x^*_i=0$ for $i\in\bcB$, we observe that the above equality also holds for 
$i\in\bcB$. Hence, $x^*$ satisfies \eqref{1st-cond} and it is a stationary point 
of \eqref{lp}. 
\end{proof}

\gap

Notice that the above $\irl_\alpha$ method requires solving a sequence of reweighted 
$l_\alpha$ minimization problems \eqref{la-subprob} whose solution may not be cheaply 
computable. We next propose a variant of this method in which each subproblem is much 
simpler and has a closed form solution for some $\alpha$'s (e.g., $\alpha=1$ or $2$).

\gap

\noindent
{\bf A variant of the second type of $\irl_\alpha$ minimization method for \eqref{lp}:}  \\ [5pt]
Let $\{\eps^k\}$ be a sequence of positive vectors in $\Re^n$, and let 
$0< L_{\min} < L_{\max}$, $\tau>1$ and $c>0$ be given. Choose an arbitrary $x^0$. 
Set $k=0$. 
\begin{itemize}
\item[1)] Choose $L^0_k \in [L_{\min}, L_{\max}]$ arbitrarily. Set $L_k = L^0_k$. 
\bi
\item[1a)] Solve the weighted $l_\alpha$ minimization problem 
\beq \label{la-close-subprob}
x^{k+1} \in \Arg\min\limits_x \left\{f(x^k)+\nabla f(x^k)^T(x-x^k) + 
\frac{L_k}{2}\|x-x^k\|^2_2 + \frac{\lambda p}{\alpha}\sum^n_{i=1} s^k_i |x_i|^\alpha\right\},
\eeq
where $s^k_i = (|x^k_i|^\alpha+\eps^k_i)^{\frac{p}{\alpha}-1}$ for all $i$.
\item[1b)] If 
\beq \label{descent-gap-la}
\bF_{\ae^k}(x^k) - \bF_{\ae^{k+1}}(x^{k+1}) \ge \frac{c}{2} \|x^{k+1}-x^k\|^2_2
\eeq 
is satisfied, then go to step 2). 
\item[1c)] Set $L_k \leftarrow \tau L_k$ and go to step 1a).
\ei
\item[2)]
Set $k \leftarrow k+1$ and go to step 1). 
\end{itemize}
\noindent
{\bf end}

\vgap

We first show that for each outer iteration, the number of its inner iterations is finite.  

\begin{theorem} \label{inner-iteration-la}
For each $k \ge 0$, the inner termination criterion \eqref{descent-gap-la} 
is satisfied after at most $\left\lceil \frac{\log(L_f+c)-\log(2L_{\min})}
{\log \tau} +2\right\rceil$ inner iterations.
\end{theorem}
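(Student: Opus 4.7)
The plan is to adapt the proof of Theorem \ref{valpha-inner} almost verbatim; the one genuinely new point is reconciling the two different perturbation parameters $\eps^k$ and $\eps^{k+1}$ that appear in the new descent test \eqref{descent-gap-la}. For this I would invoke (as in Theorem \ref{converge-la-lp}) that the sequence $\{\eps^k\}$ is component-wise non-increasing, so that the monotonicity of the map $t \mapsto (|x_i|^\alpha + t)^{p/\alpha}$ in $t \ge 0$ can be exploited.

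First, combining the $L_k$-strong convexity of the subproblem objective in \eqref{la-close-subprob} with the standard descent inequality \eqref{lip-ineq} for the $L_f$-Lipschitz gradient $\nabla f$, exactly as in the proof of Theorem \ref{valpha-inner}, yields
\[
G_{\alpha,\eps^k}(x^{k+1}, s^k) \ \le \ G_{\alpha,\eps^k}(x^k, s^k) - \bigl(L_k - L_f/2\bigr)\|x^{k+1}-x^k\|^2_2,
\]
where $G_\ae$ is as in \eqref{Gxs-alpha}. Invoking the variational identity \eqref{aux-opt}, one has $\bF_{\alpha,\eps^k}(x^k) = G_{\alpha,\eps^k}(x^k, s^k)$ (since $s^k$ attains the minimum of $G_{\alpha,\eps^k}(x^k, \cdot)$ over $s \ge 0$) and $\bF_{\alpha,\eps^k}(x^{k+1}) \le G_{\alpha,\eps^k}(x^{k+1}, s^k)$, so chaining gives
\[
\bF_{\alpha,\eps^k}(x^{k+1}) \ \le \ \bF_{\alpha,\eps^k}(x^k) - \bigl(L_k - L_f/2\bigr)\|x^{k+1}-x^k\|^2_2.
\]

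Second, by the component-wise monotonicity $\eps^{k+1} \le \eps^k$ together with the fact that $(|x_i|^\alpha + t)^{p/\alpha}$ is non-decreasing in $t \ge 0$ for $p>0$, one immediately obtains $\bF_{\alpha,\eps^{k+1}}(x^{k+1}) \le \bF_{\alpha,\eps^k}(x^{k+1})$. Combining with the previous display yields
\[
\bF_{\alpha,\eps^k}(x^k) - \bF_{\alpha,\eps^{k+1}}(x^{k+1}) \ \ge \ \bigl(L_k - L_f/2\bigr)\|x^{k+1}-x^k\|^2_2,
\]
so the test \eqref{descent-gap-la} is satisfied as soon as $L_k \ge (L_f+c)/2$.

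Finally, since each failed inner iteration multiplies $L_k$ by $\tau>1$, letting $\bar L_k$ denote the terminal value of $L_k$ gives $\bar L_k / \tau < (L_f+c)/2$ while $\bar L_k \ge L^0_k \tau^{n_k-1} \ge L_{\min}\tau^{n_k-1}$, and solving for $n_k$ produces the stated bound $n_k \le \lceil (\log(L_f+c) - \log(2L_{\min}))/\log \tau + 2\rceil$. The main (only) potential obstacle is the reliance on $\{\eps^k\}$ being non-increasing: without it, $\bF_{\alpha,\eps^{k+1}}(x^{k+1}) - \bF_{\alpha,\eps^k}(x^{k+1})$ could be strictly positive and the descent criterion might never be met regardless of how large $L_k$ becomes, so this hypothesis (implicit from the surrounding convergence analysis) is essential.
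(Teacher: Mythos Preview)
Your proposal is correct and follows essentially the same approach as the paper. The only cosmetic difference is that the paper handles the passage from $\eps^k$ to $\eps^{k+1}$ at the level of $G$ (arguing $G_{\alpha,\eps^{k+1}}(x^{k+1},s^{k+1}) \le G_{\alpha,\eps^k}(x^{k+1},s^k)$ via the two-step chain from Theorem~\ref{converge-la-lp}), whereas you do it directly at the level of $\bF$ via the monotonicity of $t\mapsto (|x_i|^\alpha+t)^{p/\alpha}$; both routes are equivalent, and your observation that the non-increasing assumption on $\{\eps^k\}$ is essential is exactly right.
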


\begin{proof}
Let $G_\ae(\cdot,\cdot)$ be defined in \eqref{Gxs-alpha}. Since the objective function of 
\eqref{la-close-subprob} is strong convex with modulus $L_k$, we have 
\[
\ba{lcl}
f(x^k)+ \frac{\lambda p}{\alpha} \sum^n_{i=1} s^k_i |x^k_i|^{\alpha} &\ge&
\ f(x^k)+\nabla f(x^k)^T(x^{k+1}-x^k) 
+ \frac{\lambda p}{\alpha} \sum^n_{i=1} s^k_i |x^{k+1}_i|^{\alpha} + L_k\|x^{k+1}-x^k\|^2_2, \\ [6pt]
&\ge& f(x^{k+1})+ \frac{\lambda p}{\alpha}\sum^n_{i=1} s^k_i |x^{k+1}_i|^{\alpha} + 
(L_k-\frac{L_f}{2})\|x^{k+1}-x^k\|^2_2,
\ea
\]
where the last inequality is due to \eqref{lip-ineq}. This inequality together with 
the definition of $G_{\ae}$ implies that 
\[
G_{\alpha,\eps^k}(x^{k},s^k) \ \ge \ G_{\alpha,\eps^k}(x^{k+1},s^k) + (L_k-\frac{L_f}{2})\|x^{k+1}-x^k\|^2_2.
\]
In addition, by the same arguments as in the proof of Theorem \ref{converge-la-lp}, we 
have $G_{\eps^{k+1}}(x^{k+1},s^{k+1}) \ \le \ G_{\alpha,\eps^k}(x^{k+1},s^k)$. By the definitions of 
$s^k$ and $\bF_{\ae}$, one can easily verify that $G_{\alpha,\eps^k}(x^{k},s^k) = \bF_{\ae^k}(x^k)$ 
for all $k$. Combining these relations with the above inequality, we obtain that 
\[
\ba{lcl}
\bF_{\ae^{k+1}}(x^{k+1}) &=& G_{\eps^{k+1}}(x^{k+1},s^{k+1}) \ \le \ G_{\alpha,\eps^k}(x^{k+1},s^k) \ \le \ 
G_{\alpha,\eps^k}(x^{k},s^k) -(L_k-\frac{L_f}{2})\|x^{k+1}-x^k\|^2_2 \\ [6pt] 
&=&  \bF_{\ae^k}(x^k) - (L_k-\frac{L_f}{2})\|x^{k+1}-x^k\|^2_2.
\ea
\]
Hence, \eqref{descent-gap-la} holds whenever $L_k \ge (L_f+c)/2$. The rest of the proof is similar to 
that of Theorem \ref{inner-iteration}.
\end{proof}

\gap

We next show that any accumulation point of the sequence $\{x^k\}$ generated above 
is a first-order stationary point of problem \eqref{lp}.

\begin{theorem} \label{converge-la-lp-v}
Suppose that $\{\epsilon^k\}$ is a sequence of non-increasing positive vectors in $\Re^n$ 
and $\epsilon^k \to 0$ as $k \to \infty$. Let the sequence $\{x^k\}$ be generated by the above 
$\irl_\alpha$ method. Suppose that $x^*$ is an accumulation point of $\{x^k\}$. Then, $x^*$ 
is a stationary point of \eqref{lp}, i.e., \eqref{1st-cond} holds at $x^*$.
\end{theorem}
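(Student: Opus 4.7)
The plan is to adapt the argument from Theorem \ref{converge-la-lp} (for the non-variant second-type method) by exploiting the extra descent property \eqref{descent-gap-la} supplied by the line-search-style inner loop. The boundedness of the sequence $\{\bar L_k\}$, guaranteed by the proof of Theorem \ref{inner-iteration-la} (namely $\bar L_k \in [L_{\min},\tau(L_f+c)/2)$), will be used throughout.

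First I would exploit \eqref{descent-gap-la} to show that $\{\bF_{\ae^k}(x^k)\}$ is non-increasing and that $\|x^{k+1}-x^k\|_2 \to 0$. Picking a subsequence $K$ with $\{x^k\}_K \to x^*$ and using the continuity of $(t,\delta) \mapsto (|t|^\alpha+\delta)^{p/\alpha}$ together with $\eps^k \to 0$, I would conclude that $\bF_{\ae^k}(x^k) \to f(x^*)+\lambda\|x^*\|_p^p$ along $K$; monotonicity then forces this convergence to hold for the full sequence, and summing \eqref{descent-gap-la} gives $\|x^{k+1}-x^k\|_2\to 0$. In particular $\{x^{k+1}\}_K \to x^*$ as well.

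Next I would write down the first-order optimality condition of the strongly convex subproblem \eqref{la-close-subprob} at $L_k = \bar L_k$, which yields, componentwise,
\[
0 \in \frac{\partial f(x^k)}{\partial x_i} + \bar L_k\,(x^{k+1}_i - x^k_i) + \lambda p\, s^k_i\, |x^{k+1}_i|^{\alpha-1}\sgn(x^{k+1}_i).
\]
Multiplying by $x^{k+1}_i$ eliminates the selection problem in $\sgn$ and gives
\[
0 = x^{k+1}_i \frac{\partial f(x^k)}{\partial x_i} + \bar L_k(x^{k+1}_i-x^k_i)x^{k+1}_i + \lambda p\, s^k_i\, |x^{k+1}_i|^{\alpha}.
\]
Let $\cB = \{i: x^*_i \neq 0\}$. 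For $i \in \cB$, I would pass to the limit on $K$: the first term tends to $x^*_i \partial f(x^*)/\partial x_i$ by continuity of $\nabla f$; the second term vanishes because $\bar L_k$ is bounded and $\|x^{k+1}-x^k\|_2 \to 0$; and, using $\eps^k\to 0$, $\{x^k_i\}_K \to x^*_i \neq 0$, and $\{x^{k+1}_i\}_K \to x^*_i$, the third term converges to $\lambda p|x^*_i|^{p-\alpha}|x^*_i|^{\alpha}=\lambda p|x^*_i|^p$. For $i \notin \cB$, the identity $x^*_i\,\partial f(x^*)/\partial x_i+\lambda p|x^*_i|^p = 0$ holds trivially since $x^*_i=0$. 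Assembling both cases recovers \eqref{1st-cond}.

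The delicate point, as in Theorem \ref{converge-la-lp}, is the indices $i \notin \cB$: there $s^k_i = (|x^k_i|^\alpha+\eps^k_i)^{p/\alpha-1}$ may blow up because $p/\alpha-1 < 0$. I avoid this difficulty entirely by multiplying the optimality condition by $x^{k+1}_i$ \emph{before} taking limits, so that for $i \notin \cB$ the limiting identity reduces to $0=0$ and the potentially unbounded weight $s^k_i$ never needs to be analyzed. This is the same device used in the proof of Proposition \ref{nec-conds}(i) and of Theorem \ref{converge-la-lp}, and it is what makes the argument go through without any assumption on how fast $\eps^k$ is driven to zero relative to the iterates.
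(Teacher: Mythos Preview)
Your proposal is correct and follows essentially the same route as the paper: use \eqref{descent-gap-la} to get monotonicity of $\{\bF_{\ae^k}(x^k)\}$ and $\|x^{k+1}-x^k\|\to 0$, invoke the boundedness $\bar L_k\in[L_{\min},\tau(L_f+c)/2)$, write the optimality condition of \eqref{la-close-subprob}, pass to the limit on $\cB=\{i:x^*_i\neq 0\}$ where $s^k_i\to|x^*_i|^{p-\alpha}$, and observe that \eqref{1st-cond} is trivial on the complement. The only cosmetic difference is that you multiply the optimality condition by $x^{k+1}_i$ \emph{before} taking the limit, whereas the paper takes the limit first (for $i\in\cB$) and multiplies by $x^*_i$ afterwards; both handle $i\notin\cB$ by direct verification rather than by passing to the limit, so your ``multiply first'' device is not actually what avoids the blow-up of $s^k_i$.
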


\begin{proof}
Since $\bF_\ae(x) \ge F(x)  \ge \underline f$ for every $x\in \Re^n$, we see
that $\{\bF_{\ae^k}(x^k)\}$ is bounded below. In addition, $\{\bF_{\ae^k}(x^k)\}$ is 
non-increasing due to \eqref{descent-gap-la}. Hence, $\{\bF_{\ae^k}(x^k)\}$ converges, 
which together with \eqref{descent-gap-la} implies that $\|x^{k+1}-x^k\| \to 0$. 
Let $\bar L_k$ denote the final value of $L_k$ at the $k$th outer iteration. By a 
similar argument as in the proof of Theorem \ref{valpha-outer}, we can show 
that $\bar L_k \in [L_{\min}, \tau(L_f+c)/2)$. Let $\cB = \{i|x^*_i \neq 0\}$.
Since $x^*$ is an accumulation point of $\{x^k\}$, there exists a subsequence $K$ 
such that $\{x^k\}_K \to x^*$. By the definition of $s^k$, we see that 
$\lim\limits_{k\in K \to \infty} s^k_i = |x^*_i|^{p-\alpha}$. The first-order optimality 
condition of \eqref{la-close-subprob} with $L_k = \bar L_k$ yields
\[
\frac{\partial f(x^{k+1})}{\partial x_i} +\bar L_k (x^{k+1}_i-x^k_i) + \lambda p 
s^k_i |x^{k+1}_i|^{\alpha-1}\sgn(x^{k+1}_i)  = 0,  \ \ \ \forall i \in \cB.
\] 
Upon taking limits on both sides of the above equality as $k\in K \to \infty$, and using 
the relation $\lim\limits_{k\in K \to \infty} s^k_i = |x^*_i|^{p-\alpha}$, we have
\[
\frac{\partial f(x^*)}{\partial x_i} + \lambda p |x^*_i|^{p-1}\sgn(x^*_i)  = 0,  
\ \ \ \forall i \in \cB.
\]
Using this relation and a similar argument as in the proof of Theorem \ref{nec-conds} 
(i), we can conclude that $x^*$ satisfies \eqref{1st-cond}.
\end{proof}

\section{New iterative reweighted $l_1$ minimization for \eqref{lp}}
\label{new-IRL1}

The $\irl_1$ and $\irl_2$ methods studied in Section \ref{unify} 
require that the parameter $\eps$ be dynamically adjusted and approach zero. 
One natural question is whether an iterative reweighted minimization 
method can be proposed for \eqref{lp} that shares a similar convergence 
with those methods but does not need to adjust $\eps$. We will address 
this question by proposing a new $\irl_1$ method and its variant.

As shown in Subsection \ref{lip-approx}, problem \eqref{lp-eps} has a locally Lipschitz 
continuous objective function and it is an $\eps$-approximation to \eqref{lp}. Moreover, 
when $\eps$ is below a computable threshold value, a certain stationary point of \eqref{lp-eps} 
is also that of \eqref{lp}. In this section we propose new $\irl_1$ methods for 
solving \eqref{lp}, which can be viewed as the $\irl_1$ methods directly applied to problem 
\eqref{lp-eps}. The novelty of these methods is in that the parameter $\eps$ is chosen only 
once and then fixed throughout all iterations. Remarkably, we are able to establish that any 
accumulation point of the sequence generated by these methods is a first-order stationary 
point of \eqref{lp}.

\gap

\noindent
{\bf New $\irl_1$ minimization method for \eqref{lp}:}  \\ [5pt]
Let $q$ be defined in \eqref{q}. Choose  an arbitrary $x^0 \in \Re^n$ and $\eps$ such that \eqref{eps-uppbdd} holds.  
Set $k=0$. 
\begin{itemize}
\item[1)] Solve the weighted $l_1$ minimization problem 
\beq \label{l1-subprob}
x^{k+1} \in \Arg\min \left\{f(x)+\lambda p \sum^n_{i=1} s^k_i |x_i|\right\},
\eeq
where $s^k_i = \min\left\{(\frac{\eps}{\lambda n})^{\frac1q},|x^k_i|^{\frac{1}{q-1}}\right\}$ 
for all $i$.
\item[2)]
Set $k \leftarrow k+1$ and go to step 1). 
\end{itemize}
\noindent
{\bf end}

\gap

We next establish that any accumulation point of $\{x^k\}$ generated by the above method 
is a first-order stationary point of \eqref{lp}.

\begin{theorem} \label{wl1-eps}
 Let the sequence $\{x^k\}$ be generated by the above $\irl_1$ method. Assume that $\eps$ 
satisfies \eqref{eps-uppbdd}. Suppose that $x^*$ is an accumulation point of $\{x^k\}$. Then 
$x^*$ is a first-order stationary point of \eqref{lp}, i.e., \eqref{1st-cond} holds at $x^*$. 
Moreover, the nonzero entries of $x^*$ satisfy the first-order bound \eqref{1st-bdd}. 
\end{theorem}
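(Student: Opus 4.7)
My plan is to view the new $\irl_1$ method as alternating minimization on the joint function
\[
G_\eps(x, s) := f(x) + \lambda p \sum_{i=1}^n \left(|x_i| s_i - \frac{s_i^q}{q}\right), \qquad s \in [0, \uel]^n.
\]
By Lemma \ref{approx-abs} with $u=\uel$, the partial minimum in $s$ recovers $F_\eps(x)$, and the argmin is $\min\{\uel, |x_i|^{1/(q-1)}\}$, which is exactly the weight $s^k$ used in step 1). Since the $-s_i^q/q$ term is constant in $x$, the subproblem \eqref{l1-subprob} coincides with $\Argmin_x G_\eps(x, s^k)$, and one sweep of the algorithm reads $s^k \in \Argmin_{s \in [0,\uel]^n} G_\eps(x^k, s)$ and $x^{k+1} \in \Argmin_x G_\eps(x, s^k)$.

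The first step is the standard alternating-minimization chain
\[
F_\eps(x^{k+1}) = G_\eps(x^{k+1}, s^{k+1}) \le G_\eps(x^{k+1}, s^k) \le G_\eps(x^k, s^k) = F_\eps(x^k),
\]
so $\{F_\eps(x^k)\}$ is non-increasing, and in particular $F_\eps(x^*) \le F_\eps(x^0)$ for every accumulation point $x^*$. For a subsequence $\{x^k\}_K \to x^*$, the componentwise map $t \mapsto \min\{\uel, |t|^{1/(q-1)}\}$ is continuous on all of $\Re$: away from zero this is obvious, and at $t=0$ the clamp saturates at $\uel$ under the convention $0^\beta=\infty$ (for $\beta<0$) adopted in Subsection \ref{notation}. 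Hence $\{s^k\}_K \to s^*$ with $s^*_i = \min\{\uel, |x^*_i|^{1/(q-1)}\} > 0$ for every $i$. Continuity of $F_\eps$ together with the monotonicity forces $F_\eps(x^k)\to F_\eps(x^*)=G_\eps(x^*,s^*)$, and squeezing the chain above gives $G_\eps(x^{k+1}, s^k) \to G_\eps(x^*, s^*)$ along $K$. Passing to the limit along $K$ in $G_\eps(x, s^k) \ge G_\eps(x^{k+1}, s^k)$ for each fixed $x$ (using continuity of $G_\eps(x,\cdot)$ at $s^*>0$) then delivers $x^* \in \Argmin_x G_\eps(x, s^*)$.

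The second step translates this minimality into $0 \in \partial F_\eps(x^*)$. The first-order necessary condition at the minimizer $x^*$ of $G_\eps(\cdot,s^*)$ reads $0 \in \nabla_i f(x^*) + \lambda p s^*_i \partial|x^*_i|$ componentwise. A case split on whether $|x^*_i|$ exceeds $\uel^{q-1}$ matches $\lambda p s^*_i \partial|x^*_i|$ with the Clarke subdifferential $\lambda \partial h_{\uel}(x^*_i)$ computed in \eqref{deriv}; hence $x^*$ is a first-order stationary point of \eqref{lp-eps}. I then invoke Theorem \ref{stat-pts}: the bound \eqref{eps-uppbdd} on $\eps$ is assumed, and $F_\eps(x^*) \le F_\eps(x^0)$ was established in Step 1, so Theorem \ref{stat-pts} promotes $x^*$ to a first-order stationary point of \eqref{lp} and delivers the lower bound \eqref{1st-bdd} on its nonzero entries.

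The delicate point is the continuity used for $s^k \to s^*$. In every scheme of Section \ref{unify} the analogous weight $(|x^k_i|^\alpha+\eps^k_i)^{p/\alpha-1}$ blows up at vanishing entries and can only be controlled by driving a regularization parameter $\eps^k\to0$; here the clamp $\min\{\uel,\cdot\}$ neutralizes that singularity with $\eps$ held fixed, producing a globally continuous weight map and permitting the limit passage. It is exactly this continuity, combined with the bridge supplied by Theorem \ref{stat-pts}, that lets a single fixed $\eps$ suffice to recover stationarity for the original non-Lipschitz problem \eqref{lp}.
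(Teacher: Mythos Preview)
Your proof is correct and follows essentially the same approach as the paper: the alternating-minimization interpretation via the joint function $G$, the monotone chain yielding $F_\eps(x^k)\downarrow F_\eps(x^*)$, the continuity of the clamped weight map to pass $s^k\to s^*$, the limit argument giving $x^*\in\Arg\min_x G(x,s^*)$, and the identification of the resulting optimality condition with $0\in\partial F_\eps(x^*)$ followed by an appeal to Theorem~\ref{stat-pts}. The only cosmetic difference is that the paper squeezes $G(x^{k+1},s^k)\to F_\eps(x^*)$ along the full sequence rather than just along $K$, but your subsequential version is equally sufficient for the final limit passage.
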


\begin{proof}
Let $\uel =(\frac{\eps}{\lambda n})^{1/q}$ and 
\beq \label{Gxs}
G(x,s) = f(x)+\lambda p \sum^n_{i=1} \left[|x_i|s_i- \frac{s^q_i}{q}\right].
\eeq 
By the definition of  $\{s^k\}$, one can observe that for $k \ge 0$,
\beq \label{bcd}
s^k = \arg\min\limits_{0 \le s \le \uel} G(x^k,s), \ \ \ 
x^{k+1} \in \Arg\min\limits_x G(x,s^k). 
\eeq
In addition, we observe that $F_\eps(x) = \min\limits_{0 \le s \le \uel} G(x,s)$ 
and $F_\eps(x^k)=G(x^k,s^k)$ for all $k$, where $F_\eps$ is defined in \eqref{feps}. It then follows that 
\beq \label{fval}
F_\eps(x^{k+1}) \ = \ G(x^{k+1},s^{k+1}) \ \le \ G(x^{k+1},s^k) \ \le \ G(x^k,s^k) \ = \ F_\eps(x^k).
\eeq
Hence, $\{F_\eps(x^k)\}$ is non-increasing. Since $x^*$ is an accumulation point of $\{x^k\}$, there 
exists a subsequence $K$ such that $\{x^k\}_{K} \to x^*$. By the continuity of $F_\eps$, we have 
$\{F_\eps(x^k)\}_K \to F_\eps(x^*)$, which together with the monotonicity of $\{F_\eps(x^k)\}$ 
implies that $F_\eps(x^k) \to F_\eps(x^*)$. Let $s^*_i = \min\{u_\eps,|x^*_i|^{\frac{1}{q-1}}\}$ for all $i$. 
We then observe that $\{s^k\}_K \to s^*$ and $F_\eps(x^*)=G(x^*,s^*)$. Using \eqref{fval} and 
$F_\eps(x^k) \to F_\eps(x^*)$, we see that $G(x^{k+1},s^k) \to F_\eps(x^*)=G(x^*,s^*)$. In addition, 
it follows from \eqref{bcd} that 
\[
G(x,s^k) \ \ge \ G(x^{k+1},s^k) \ \ \ \forall x\in \Re^n.
\]
Upon taking limits on both sides of this inequality as $k\in K \to \infty$, we have 
\[
G(x,s^*) \ \ge \ G(x^*,s^*) \ \ \ \forall x\in \Re^n,
\]
that is, 
\beq \label{lim-pt}
x^* \in  \Arg\min \left\{f(x)+\lambda p \sum^n_{i=1} s^*_i |x_i|\right\}.
\eeq
The first-order optimality condition of \eqref{lim-pt} yields
\beq \label{1st-lim-opt}
0 \in \frac{\partial f(x^*)}{\partial x_i} + \lambda p s^*_i \sgn(x^*_i) ,  
\ \ \ \forall i.
\eeq  
Recall that $s^*_i = \min\{u_\eps,|x^*_i|^{\frac{1}{q-1}}\}$, which together with \eqref{q} 
implies that for all $i$, 
\[
s^*_i = \left\{\ba{ll}
|x^*_i|^{p-1}, & \mbox{if} \ |x^*_i| > u_\eps^{q-1}, \\
u_\eps, & \mbox{if} \ |x^*_i| \le u_\eps^{q-1}.
\ea\right.
\]
Substituting it into \eqref{1st-lim-opt} and using \eqref{deriv}, we obtain that 
\[
0 \in \frac{\partial f(x^*)}{\partial x_i} + \lambda \partial h_{u_\eps}(x^*_i),  
\ \ \ \forall i.
\] 
It then follows from \eqref{feps} that $x^*$ is a first-order stationary point of $F_\eps$. 
In addition, by the monotonicity of $\{F_\eps(x^k)\}$ and $F_\eps(x^k) \to F_\eps(x^*)$, 
we know that $F_\eps(x^*) \le F_\eps(x^0)$. Using these results and Theorem \ref{stat-pts}, 
we conclude that $x^*$ is  a first-order stationary point of \eqref{lp}. The rest of conclusion 
immediately follows from Theorem \ref{1st-lower-bdd}. 
\end{proof}

\gap

The above $\irl_1$ method needs to solve a sequence of reweighted $l_1$ minimization 
problems \eqref{l1-subprob} whose solution may not be cheaply computable. 
We next propose a variant of this method in which each subproblem has a 
closed form solution.

\gap

\noindent
{\bf A variant of new $\irl_1$ minimization method 
for \eqref{lp}:}  \\ [5pt]
Let $0< L_{\min} < L_{\max}$, $\tau>1$ and $c>0$ be given. Let $q$ be defined in \eqref{q}. 
Choose an arbitrary $x^0$ and $\eps$ such that \eqref{eps-uppbdd} holds. Set $k=0$. 
\begin{itemize}
\item[1)] Choose $L^0_k \in [L_{\min}, L_{\max}]$ arbitrarily. Set $L_k = L^0_k$.  
\bi
\item[1a)] Solve the weighted $l_1$ minimization problem 
\beq \label{l1-close-subprob}
x^{k+1} \in \Arg\min\limits_x \left\{f(x^k)+\nabla f(x^k)^T(x-x^k) + 
\frac{L_k}{2}\|x-x^k\|^2_2 + \lambda p \sum^n_{i=1} s^k_i |x_i|\right\},
\eeq
where $s^k_i = \min\left\{(\frac{\eps}{\lambda n})^{\frac1q},
|x^k_i|^{\frac{1}{q-1}}\right\}$ for all $i$. 
\item[1b)] If 
\beq \label{descent-gap}
F_\eps(x^k) - F_\eps(x^{k+1}) \ge \frac{c}{2} \|x^{k+1}-x^k\|^2_2
\eeq 
is satisfied, where $F_\eps$ is defined in \eqref{feps}, then go to step 2). 
\item[1c)] Set $L_k \leftarrow \tau L_k$ and go to step 1a).
\ei
\item[2)]
Set $k \leftarrow k+1$ and go to step 1). 
\end{itemize}
\noindent
{\bf end}

\vgap

We first show that for each outer iteration, the number of its inner 
iterations is finite.  

\begin{theorem} \label{inner-iteration}
For each $k \ge 0$, the inner termination criterion \eqref{descent-gap} 
is satisfied after at most $\left\lceil \frac{\log(L_f+c)-\log(2L_{\min})}
{\log \tau} +2\right\rceil$ inner iterations.
\end{theorem}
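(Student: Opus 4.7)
The plan is to mirror the arguments used in Theorem \ref{valpha-inner} and Theorem \ref{inner-iteration-la}, adapted to the Lipschitz continuous approximation $F_\eps$ defined in \eqref{feps}. Let $G(x,s)$ be as defined in \eqref{Gxs}. The crucial auxiliary facts I will invoke (already established in the proof of Theorem \ref{wl1-eps}) are that (a) $F_\eps(x^k) = G(x^k,s^k)$ for the weights $s^k_i = \min\{(\eps/(\lambda n))^{1/q},|x^k_i|^{1/(q-1)}\}$, which arise precisely as the argmin of $G(x^k,\cdot)$ over $[0,u_\eps]^n$, and (b) $F_\eps(x) = \min_{0\le s\le u_\eps} G(x,s)$.

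First, since the objective of \eqref{l1-close-subprob} is strongly convex with modulus $L_k$, comparing its value at $x^k$ and at the minimizer $x^{k+1}$ yields
\[
f(x^k) + \lambda p \sum_{i=1}^n s^k_i |x^k_i| \;\ge\; f(x^k) + \nabla f(x^k)^T(x^{k+1}-x^k) + \lambda p \sum_{i=1}^n s^k_i |x^{k+1}_i| + L_k \|x^{k+1}-x^k\|_2^2.
\]
Combining this with the standard descent inequality \eqref{lip-ineq} from the $L_f$-Lipschitz continuity of $\nabla f$ gives
\[
G(x^k,s^k) \;\ge\; G(x^{k+1},s^k) + \Bigl(L_k - \tfrac{L_f}{2}\Bigr)\|x^{k+1}-x^k\|_2^2.
\]

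Next I use $F_\eps(x^{k+1}) \le G(x^{k+1},s^k)$ (a consequence of $F_\eps(x) = \min_{0\le s\le u_\eps} G(x,s)$ together with $s^k \in [0,u_\eps]^n$) and $F_\eps(x^k) = G(x^k,s^k)$ to deduce
\[
F_\eps(x^{k+1}) \;\le\; F_\eps(x^k) - \Bigl(L_k - \tfrac{L_f}{2}\Bigr)\|x^{k+1}-x^k\|_2^2.
\]
Hence condition \eqref{descent-gap} is automatically satisfied as soon as $L_k \ge (L_f+c)/2$.

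Finally I bound the number of inner iterations. If $\bar L_k$ denotes the terminal value of $L_k$, then because the previous value $\bar L_k/\tau$ failed the test we must have $\bar L_k/\tau < (L_f+c)/2$, i.e.\ $\bar L_k < \tau(L_f+c)/2$. Since $L_k$ is obtained from $L^0_k \ge L_{\min}$ by $n_k-1$ successive multiplications by $\tau$, this forces $L_{\min}\tau^{n_k-1} < \tau(L_f+c)/2$, from which the stated bound $n_k \le \lceil (\log(L_f+c)-\log(2L_{\min}))/\log\tau + 2\rceil$ follows immediately. There is no real obstacle here since the reasoning is essentially identical to that of Theorem \ref{valpha-inner}; the only thing that required care was confirming the two identities $F_\eps(x^k)=G(x^k,s^k)$ and $F_\eps(x^{k+1}) \le G(x^{k+1},s^k)$, both of which have already been verified in the setup preceding this theorem.
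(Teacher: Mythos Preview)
Your proof is correct and follows essentially the same approach as the paper: you use strong convexity of the subproblem together with the descent lemma \eqref{lip-ineq} to obtain $G(x^k,s^k)\ge G(x^{k+1},s^k)+(L_k-\tfrac{L_f}{2})\|x^{k+1}-x^k\|_2^2$, then invoke $F_\eps(x^k)=G(x^k,s^k)$ and $F_\eps(x^{k+1})\le G(x^{k+1},s^k)$ to conclude. The counting argument for $n_k$ is identical to that of Theorem~\ref{valpha-inner}, and indeed the paper itself simply refers back to that earlier proof for this final step.
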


\begin{proof}
Let $\bar L_k$ denote the final value of $L_k$ at the $k$th outer 
iteration. Since the objective function of \eqref{l1-close-subprob} is strongly convex with 
modulus $L_k$, we have 
\[
\ba{lcl}
f(x^k)+\lambda p \sum^n_{i=1} s^k_i |x^k_i| &\ge & f(x^k)+\nabla f(x^k)^T(x^{k+1}-x^k) 
+ \lambda p \sum^n_{i=1} s^k_i |x^{k+1}_i| + L_k\|x^{k+1}-x^k\|^2_2, \\ [6pt]
& \ge & f(x^{k+1}) + 
\lambda p \sum^n_{i=1} s^k_i |x^{k+1}_i| + (L_k-\frac{L_f}{2})\|x^{k+1}-x^k\|^2_2, 
\ea
\]
where the last inequality is due to \eqref{lip-ineq}. This inequality together with 
\eqref{Gxs} yields 
\[
G(x^k,s^k) \ \ge \ G(x^{k+1},s^k) + (L_k-\frac{L_f}{2})\|x^{k+1}-x^k\|^2_2.
\]
Recall that $F_\eps(x) = \min\limits_{0 \le s \le \uel} G(x,s)$ and 
$F_\eps(x^k)=G(x^k,s^k)$, where $\uel=(\frac{\eps}{\lambda n})^{1/q}$. 
Using these relations and the above inequality, we obtain that
\[
\ba{lcl}
F_\eps(x^{k+1}) &=& G(x^{k+1},s^{k+1}) \ \le \ G(x^{k+1},s^k) \ \le \ 
G(x^k,s^k) -(L_k-\frac{L_f}{2})\|x^{k+1}-x^k\|^2_2 \\ [6pt]
&=&  F_\eps(x^k) - (L_k-\frac{L_f}{2})\|x^{k+1}-x^k\|^2_2.
\ea
\]
Hence, \eqref{descent-gap} holds whenever $L_k \ge (L_f+c)/2$. The rest of the proof is 
similar to that of Theorem \ref{inner-iteration}. 
\end{proof}

\gap

We next establish that any accumulation point of the sequence $\{x^k\}$ generated above 
is a first-order stationary point of problem \eqref{lp}.

\begin{theorem} \label{wl1-close-eps}
Let the sequence $\{x^k\}$ be generated by the above variant of new $\irl_1$ method. Assume 
that $\eps$ satisfies \eqref{eps-uppbdd}. Suppose that $x^*$ is an accumulation point of $\{x^k\}$. 
Then $x^*$ is a first-order stationary point of \eqref{lp}, i.e., \eqref{1st-cond} holds at $x^*$. 
Moreover, the nonzero entries of $x^*$ satisfy the first-order bound \eqref{1st-bdd}. 
\end{theorem}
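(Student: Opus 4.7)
The plan is to mirror the proof of Theorem \ref{wl1-eps}, but to accommodate the adaptive choice of $L_k$ and the sufficient-descent safeguard \eqref{descent-gap}. The strategy has three stages: first show $\|x^{k+1}-x^k\|_2 \to 0$ along the whole sequence; next pass to the limit in the first-order optimality condition of the subproblem \eqref{l1-close-subprob} to conclude that $x^*$ is a first-order stationary point of the approximation problem \eqref{lp-eps}; and finally invoke Theorem \ref{stat-pts} (which applies since $\eps$ satisfies \eqref{eps-uppbdd}) and Theorem \ref{1st-lower-bdd} to obtain both \eqref{1st-cond} and the lower bound \eqref{1st-bdd}.

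First I would observe that \eqref{descent-gap} makes $\{F_\eps(x^k)\}$ non-increasing. Since $F_\eps(x) \ge F(x) \ge \underline f$ for every $x\in\Re^n$ by Proposition \ref{approx-lp}(i) and the assumption that $f$ is bounded below, $\{F_\eps(x^k)\}$ is bounded below and hence convergent. Then $F_\eps(x^k) - F_\eps(x^{k+1}) \to 0$, and combining with \eqref{descent-gap} gives $\|x^{k+1}-x^k\|_2 \to 0$. By Theorem \ref{inner-iteration} the accepted $\bar L_k$ lies in $[L_{\min}, \tau(L_f+c)/2)$, so along the subsequence $K$ with $\{x^k\}_K \to x^*$, after further thinning I may assume $\bar L_k \to L^*$ in this compact interval and also $\{x^{k+1}\}_K \to x^*$.

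Next I would write the first-order optimality condition of \eqref{l1-close-subprob} evaluated at $\bar L_k$: there exist $\xi^{k+1}_i \in \partial|x^{k+1}_i|$ (in the sense of the Clarke/convex subdifferential of $|\cdot|$, i.e., the $\sgn$ multifunction) with
\[
\nabla_i f(x^k) + \bar L_k (x^{k+1}_i - x^k_i) + \lambda p\, s^k_i\, \xi^{k+1}_i \;=\; 0, \quad \forall i.
\]
By the definition $s^k_i = \min\{\uel, |x^k_i|^{1/(q-1)}\}$ and the convention $0^\beta = \infty$ for $\beta<0$, along $K$ we have $s^k_i \to s^*_i := \min\{\uel, |x^*_i|^{1/(q-1)}\}$; the middle term vanishes because $\bar L_k$ is bounded and $\|x^{k+1}-x^k\|_2 \to 0$; and $\xi^{k+1}_i$ has a cluster point in $\partial|x^*_i|$ by outer semicontinuity of $\sgn$. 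Passing to the limit yields $0 \in \nabla f(x^*) + \lambda p\, s^*_i\, \sgn(x^*_i)$ for each $i$, and identifying $p s^*_i \sgn(x^*_i)$ with $\partial h_\uel(x^*_i)$ via the explicit formula \eqref{deriv} shows $0 \in \partial F_\eps(x^*)$; that is, $x^*$ is a first-order stationary point of \eqref{lp-eps}.

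Finally, monotonicity of $\{F_\eps(x^k)\}$ gives $F_\eps(x^*) \le F_\eps(x^0)$, so the hypotheses of Theorem \ref{stat-pts} are satisfied and \eqref{1st-cond} holds at $x^*$; the bound \eqref{1st-bdd} then follows from Theorem \ref{1st-lower-bdd}. The delicate point I expect to handle carefully is the identification of the limiting subgradient at coordinates with $x^*_i = 0$: there $|x^k_i|^{1/(q-1)} \to \infty$ so $s^k_i$ saturates at $\uel$, and one must verify through \eqref{deriv} that the resulting inclusion $0 \in \nabla_i f(x^*) + \lambda p\uel\,[-1,1]$ is precisely the correct component of $\partial F_\eps(x^*)$ in the regime $|x^*_i| \le \uel^{q-1}$. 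Once this accounting is done, everything else is a direct application of results already established.
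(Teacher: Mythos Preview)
Your proposal is correct and follows essentially the same approach as the paper's proof: show $\|x^{k+1}-x^k\|_2\to 0$ via the sufficient-descent condition, pass to the limit in the optimality condition of \eqref{l1-close-subprob} using boundedness of $\bar L_k$, identify the limit with $0\in\partial F_\eps(x^*)$, and then invoke Theorem~\ref{stat-pts} and Theorem~\ref{1st-lower-bdd}. The only cosmetic difference is that the paper obtains convergence of $\{F_\eps(x^k)\}$ via continuity of $F_\eps$ along the subsequence $K$ rather than via boundedness below, and it is slightly less explicit than you are about the $x^*_i=0$ case; your treatment of that case is a welcome clarification rather than a departure.
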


\begin{proof}
It follows from \eqref{descent-gap} that $\{F_\eps(x^k)\}$ is non-increasing.
Since $x^*$ is an accumulation point of $\{x^k\}$, there exists a subsequence $K$ such 
that $\{x^k\}_{K} \to x^*$. By the continuity of $F_\eps$, we have 
$\{F_\eps(x^k)\}_K \to F_\eps(x^*)$, which together with the monotonicity of $\{F_\eps(x^k)\}$ 
implies that $F_\eps(x^k) \to F_\eps(x^*)$. Using this result and \eqref{descent-gap}, we 
can conclude that $\|x^{k+1}-x^k\| \to 0$. Let $\bar L_k$ denote the final value of $L_k$ at 
the $k$th outer iteration. By a similar argument as in the proof of Theorem \ref{valpha-outer}, 
one can show that $\bar L_k \in [L_{\min}, \tau(L_f+c)/2)$. The first-order optimality condition 
of \eqref{l1-close-subprob} with $L_k = \bar L_k$ yields
\[
0 \in \frac{\partial f(x^k)}{\partial x_i} + \bar L_k (x^{k+1}_i-x^k_i) + \lambda p 
s^k_i \sgn(x^{k+1}_i)  = 0,  \ \ \ \forall i.
\] 
Upon taking limits on both sides of the above equality as $k\in K \to \infty$, we 
have
\[
0 \in \frac{\partial f(x^*)}{\partial x_i} + \lambda p s^*_i \sgn(x^*_i),  
\ \ \ \forall i,
\]
where $s^*_i = \min\{(\frac{\eps}{\lambda n})^{1/q},|x^*_i|^{\frac{1}{q-1}}\}$ for all $i$. The rest of 
the proof is similar to that of Theorem \ref{wl1-eps}.
\end{proof}

\section{Computational results}
\label{results}

In this section we conduct numerical experiment to compare the performance of the variants of $\irl_1$ 
methods proposed in Subsection \ref{1st-type} and \ref{2nd-type} and Section \ref{new-IRL1}. In particular, 
we apply these methods to problem \eqref{l2-lp} whose data are randomly generated. For convenience of 
presentation, we name these variants as $\irl_1$-1, $\irl_1$-2 and $\irl_1$-3, respectively. All codes 
are written in MATLAB and all computations are performed on a MacBook Pro running with Mac OS X Lion 10.7.4 
and 4GB memory.

For all three methods, we choose $L_{\min}=1\rm e$-8, $L_{\max}=1\rm e$+8, $c=1\rm e$-4, $\tau=1.1$, and 
$L^0_0=1$. And we update $L^0_k$ by the similar strategy as used in spectral projected gradient method 
\cite{BiMaRa00}, that is, 
\[
L^0_k = \max\left\{L_{\min},\min\left\{L_{\max},\frac{\Delta x^T \Delta g}{\|\Delta x\|^2}\right\}\right\},
\]
where $\Delta x = x^k -x^{k-1}$ and $\Delta g = \nabla f(x^k) - \nabla f(x^{k-1})$. In addition, 
we choose $\eps^k=0.1^k e$ and $\delta_k = 0.1^k$ for $\irl_1$-1 and 
$\eps^k = 0.995^k e$ for $\irl_1$-2, respectively, where $e$ is the all-ones vector. 
For $\irl_1$-3, $\eps$ is chosen to be the one satisfying \eqref{eps-uppbdd} but within 
$10^{-6}$ to the supremum of all $\eps$'s satisfying \eqref{eps-uppbdd}.  The same initial 
point $x^0$ is used for $\irl_1$-1, $\irl_1$-2 and $\irl_1$-3. In particular,  we choose $x^0$ 
to be 
\[
x^0 \in \Arg\min\left\{\frac12\|Ax-b\|^2+\lambda\|x\|_1\right\},
\]
which can be computed by a variety of methods (e.g., \cite{VaFr08,BeTe09,HaYiZh07,WrNoFi09,YuTo11}).
And all methods terminate according to the following criterion
\[
\|X \nabla f(x) + \lambda p |x|^p\|_{\infty} \le \rm{1e-4},
\]
where $X=\Diag(x)$. 

In the first experiment, we set $\lambda = 3{\rm e}$-3 for problem \eqnok{l2-lp}. And the 
data $A$ and $b$ are randomly generated in the same manner as described in $l_1$-magic \cite{CanRom05}. 
In particular, given $\sigma >0$ and positive integers $m$, $n$, $T$ with 
$m < n$ and $T < n$, we first generate a matrix $W\in\Re^{n\times m}$ with entries 
randomly chosen from a normal distribution with mean zero, variance one and standard deviation 
one. Then we compute an orthonormal basis, denoted by $B$, for the range space 
of $W$, and set $A=B^T$. We also randomly generate a vector $\tx \in \Re^n$ with 
only $T$ nonzero components that are $\pm 1$, and generate a vector $v\in\Re^{m}$ 
with entries randomly chosen from a normal distribution with mean zero, variance one 
and standard deviation one. Finally, we set $b = A\tx+\sigma v$. Especially, we choose 
$\sigma=0.005$ for all instances. 

 The results of these methods for the above randomly generated instances 
with $p=0.1$ and $0.5$ are presented in Tables \ref{res1-1} and \ref{res1-2}, respectively. In 
detail, the parameters $m$ and $n$ of each instance are listed in the first two columns, respectively. 
The objective function value of problem \eqref{l2-lp}  for these methods is given in columns three to five, 
and CPU times (in seconds) are given in the last three columns, respectively. We shall mention that the 
CPU time reported here does not include the time for obtaining initial point $x^0$.  For $p=0.1$, we observe 
from Table \ref{res1-1} that all three methods produce similar objective function values. The CPU time of 
$\irl_1$-1 and $\irl_1$-3 is very close, which is much less than that of $\irl_1$-2. 
For $p=0.5$, we see from Table \ref{res1-2} that $\irl_1$-1 and $\irl_1$-3 achieve better objective function 
values than $\irl_1$-2 while the former two methods require less CPU time.

\begin{table}[t!]
\caption{Comparison of three $\irl_1$ methods for problem \eqref{l2-lp} with $p=0.1$}
\centering
\label{res1-1}
\begin{tabular}{|rr||rrr||rrr|}
\hline 
\multicolumn{2}{|c||}{Problem} & \multicolumn{3}{c||}{Objective Value} &
 \multicolumn{3}{c|}{CPU Time} \\ 
\multicolumn{1}{|c}{m} & \multicolumn{1}{c||}{n}  
& \multicolumn{1}{c}{\sc $\irl_1$-1} & \multicolumn{1}{c}{\sc $\irl_1$-2} & 
\multicolumn{1}{c||}{\sc $\irl_1$-3} &  \multicolumn{1}{c}{\sc $\irl_1$-1} & 
\multicolumn{1}{c}{\sc $\irl_1$-2} & \multicolumn{1}{c|}{\sc $\irl_1$-3}\\
\hline
120 & 512 &  0.061371 & 0.061371 & 0.061371 & 0.02 & 0.29 & 0.01\\
240 & 1024 &  0.122579 & 0.122579 & 0.122579 & 0.01 & 0.47 & 0.01\\
360 & 1536 &  0.183595 & 0.183595 & 0.183595 & 0.01 & 0.77 & 0.01\\ 
480 & 2048 &  0.245253 & 0.245253 & 0.245253 & 0.02 & 1.45 & 0.02\\
600 & 2560 &  0.305575 & 0.305575 & 0.305575 & 0.03 & 2.30 & 0.03\\
720 & 3072 & 0.367497 & 0.367496 & 0.347697 & 0.04 & 3.11 & 0.04\\
840 & 3584 &  0.429549 & 0.429548 & 0.429549 & 0.05 & 3.83 & 0.06\\
960 & 4096 &  0.489512 & 0.489512 & 0.489512 & 0.06 & 5.32 & 0.08\\
1080 & 4608 &  0.550911 & 0.550911 & 0.554911 & 0.07 & 6.59 & 0.10\\
1200 & 5120 &  0.611896 & 0.611896 & 0.611896 & 0.10 & 7.51 & 0.13\\
\hline
\end{tabular}
\end{table}

\begin{table}[t!]
\caption{Comparison of three $\irl_1$ methods for problem \eqref{l2-lp} with $p=0.5$}
\centering
\label{res1-2}
\begin{tabular}{|rr||rrr||rrr|}
\hline 
\multicolumn{2}{|c||}{Problem} & \multicolumn{3}{c||}{Objective Value} &
 \multicolumn{3}{c|}{CPU Time} \\ 
\multicolumn{1}{|c}{m} & \multicolumn{1}{c||}{n}  
& \multicolumn{1}{c}{\sc $\irl_1$-1} & \multicolumn{1}{c}{\sc $\irl_1$-2} & 
\multicolumn{1}{c||}{\sc $\irl_1$-3} &  \multicolumn{1}{c}{\sc $\irl_1$-1} & 
\multicolumn{1}{c}{\sc $\irl_1$-2} & \multicolumn{1}{c|}{\sc $\irl_1$-3}\\
\hline
120 & 512 &  0.061298 & 0.062003 & 0.061298 & 0.02 & 0.17 & 0.01\\
240 & 1024 &  0.122412 & 0.123449 & 0.122412 & 0.01 & 0.26 & 0.01\\
360 & 1536 &  0.183376 & 0.184881 & 0.183376 & 0.01 & 0.43 & 0.01\\
480 & 2048 & 0.244745 & 0.247495 & 0.244745 & 0.02 & 0.90 & 0.02\\
600 & 2560 &  0.304945 & 0.306632 & 0.304945 & 0.03 & 1.55 & 0.03\\
720 & 3072 &  0.366621 & 0.370576 & 0.366621 & 0.03 & 2.07 & 0.04\\
840 & 3584 &  0.429043 & 0.433426 & 0.429043 & 0.04 & 2.57 & 0.06\\
960 & 4096 & 0.488704 & 0.492537 & 0.488704 & 0.05 & 3.54 & 0.08\\
1080 & 4608 &  0.550031 & 0.554057 & 0.550031 & 0.06 & 4.40 & 0.10\\
1200 & 5120 &  0.610850 & 0.615399 & 0.610850 & 0.07 & 5.26 & 0.12\\
\hline
\end{tabular}
\end{table}

In the second experiment, we also randomly generate all instances for problem 
\eqnok{l2-lp}. In particular, we generate matrix $A$ and vector $b$ with entries 
randomly chosen from standard uniform distribution.  In addition, we set
 $\lambda = 3{\rm e}$-3 for \eqnok{l2-lp}. The results of these methods for 
the above randomly generated instances with $p=0.1$ and $0.5$ are presented 
in Tables \ref{res2-1} and \ref{res2-2}, respectively. Same as above, the CPU time reported 
here does not include the time for obtaining initial point $x^0$. For $p=0.1$, we observe 
from Table \ref{res2-1} that among $\irl_1$-1, $\irl_1$-2 and $\irl_1$-3 achieves best 
objective function values over 4, 4 and 3 instances out of total 10 instances, respectively. 
The average CPU time of $\irl_1$-3 is much less than that of $\irl_1$-1 and $\irl_1$-2. 
For $p=0.5$, all three methods achieve similar objective function values. The overall CPU 
time of $\irl_1$-2 and $\irl_1$-3 is very close, which is much less than that of $\irl_1$-1. 

From the above two experiments, we observe that $\irl_1$-3 is generally more stable 
than $\irl_1$-1 and $\irl_1$-2 in terms of objective function value and CPU time. 

\begin{table}[t!]
\caption{Comparison of three $\irl_1$ methods for problem \eqref{l2-lp} with $p=0.1$}
\centering
\label{res2-1}
\begin{tabular}{|rr||rrr||rrr|}
\hline 
\multicolumn{2}{|c||}{Problem} & \multicolumn{3}{c||}{Objective Value} &
 \multicolumn{3}{c|}{CPU Time} \\ 
\multicolumn{1}{|c}{m} & \multicolumn{1}{c||}{n}  
& \multicolumn{1}{c}{\sc $\irl_1$-1} & \multicolumn{1}{c}{\sc $\irl_1$-2} & 
\multicolumn{1}{c||}{\sc $\irl_1$-3} &  \multicolumn{1}{c}{\sc $\irl_1$-1} & 
\multicolumn{1}{c}{\sc $\irl_1$-2} & \multicolumn{1}{c|}{\sc $\irl_1$-3}\\
\hline
120 & 512 & 0.6557 & 0.6007 & 0.6011 & 1.25 & 1.86 & 0.93 \\
240 & 1024 &  1.1916 & 1.2090 & 1.2108 & 1.96 & 3.99 & 2.16\\
360 & 1536 &  1.7047 & 1.6955 & 1.7253 & 3.46 & 7.34 & 2.74\\
480 & 2048 &  2.3025 & 2.3112 & 2.3270 & 9.68 & 14.91 & 7.86\\
600 & 2560 &  2.7888 & 2.7432 & 2.7432 & 13.50 & 27.29 & 20.90\\
720 & 3072 &  3.3639 & 3.4051 & 3.4296 & 19.96 & 36.75 & 21.51\\
840 & 3584 &  3.7613 & 3.7614 & 3.7085 & 24.26 & 46.68 & 36.10\\
960 & 4096 & 4.4721 & 4.2879 & 4.2980 & 60.26 & 58.77 & 47.98\\
1080 & 4608 & 5.0258 & 4.8848 & 4.8649 & 72.45 & 69.51 & 39.35 \\
1200 & 5120 &  5.2228 & 5.3789 & 5.3561 & 83.99 & 91.26 & 57.97\\
\hline
\end{tabular}
\end{table}

\begin{table}[t!]
\caption{Comparison of three $\irl_1$ methods for problem \eqref{l2-lp} with $p=0.5$}
\centering
\label{res2-2}
\begin{tabular}{|rr||rrr||rrr|}
\hline 
\multicolumn{2}{|c||}{Problem} & \multicolumn{3}{c||}{Objective Value} &
 \multicolumn{3}{c|}{CPU Time} \\ 
\multicolumn{1}{|c}{m} & \multicolumn{1}{c||}{n}  
& \multicolumn{1}{c}{\sc $\irl_1$-1} & \multicolumn{1}{c}{\sc $\irl_1$-2} & 
\multicolumn{1}{c||}{\sc $\irl_1$-3} &  \multicolumn{1}{c}{\sc $\irl_1$-1} & 
\multicolumn{1}{c}{\sc $\irl_1$-2} & \multicolumn{1}{c|}{\sc $\irl_1$-3}\\
\hline
120 & 512 &  0.2408 & 0.2415 & 0.2405 & 2.12 & 1.57 & 0.99\\
240 & 1024 &  0.4096 & 0.4127 & 0.4140 & 7.10 & 3.12 & 2.42\\
360 & 1536 &  0.5361 & 0.5336 & 0.5336 & 21.50 & 5.31 & 4.04\\
480 & 2048 & 0.6900 & 0.6900 & 0.6934 & 34.93 & 14.07 & 9.95\\
600 & 2560 &  0.7725 & 0.7772 & 0.7739 & 61.08 & 21.68 & 25.49\\
720 & 3072 &  0.9393 & 0.9405 & 0.9406 & 259.72 & 34.94 & 35.55\\
840 & 3584 &  1.0113 & 1.007 & 1.007 & 313.30 & 47.24 & 39.43\\
960 & 4096 & 1.1403 & 1.1297 & 1.1280 & 533.36 & 54.50 & 52.90\\
1080 & 4608 &  1.2178 & 1.2186 & 1.2220 & 348.94 & 77.42 & 80.55\\
1200 & 5120 &  1.3291 & 1.3375 & 1.3375 & 835.89 & 104.27 & 114.99\\
\hline
\end{tabular}
\end{table}

\section{Concluding remarks}
\label{conclude}

In this paper we studied iterative reweighted minimization methods for $l_p$ regularized 
unconstrained minimization problems \eqref{lp}. In particular, we derived lower 
bounds for nonzero entries of first- and second-order stationary points, and hence also of 
local minimizers of \eqref{lp}. We extended some existing $\irl_1$ and $\irl_2$ methods to 
solve \eqref{lp} and proposed new variants for them. Also, we provided a unified convergence analysis 
for these methods. In addition, we proposed a novel Lipschitz continuous $\eps$-approximation 
to $\|x\|^p_p$. Using this result, we developed new $\irl_1$ methods for \eqref{lp} and showed 
that any accumulation point of the sequence generated by these methods is a first-order 
stationary point of problem \eqref{lp}, provided that the approximation parameter $\eps$ is 
below a computable threshold value. This is a remarkable result since all existing iterative 
reweighted minimization methods require that $\eps$ be dynamically updated and approach 
zero. Our computational results demonstrate that the new $\irl_1$ method  is generally more
stable than the existing $\irl_1$ methods \cite{FoLa09,ChZh10} in terms of objective function 
value and CPU time. 

Recently, Zhao and Li \cite{ZhLi12} proposed an $\irl_1$ minimization method to identify
sparse solutions to undetermined linear systems based on a class of regularizers. When applied 
to the $l_p$ regularizer, their method becomes one of the first type of $\irl_1$ methods discussed 
in Subsection \ref{1st-type}. Though we only studied the $l_p$ regularized minimization problems, 
the techniques developed in our paper can be useful for analyzing the iterative reweighted minimization 
methods for the optimization problem with other regularizers.

\end{document}